\definecolor{darkred}{RGB}{100,0,0}
\definecolor{darkgreen}{RGB}{0,100,0}
\definecolor{darkblue}{RGB}{0,0,150}
\def\I{{\rm I}}
\def\hcrank{{\rm HC}_{\rm rank}}
\newtheorem{thm}{Theorem}
\newtheorem{prp}{Proposition}
\newtheorem{lem}{Lemma}
\theoremstyle{remark}
\newtheorem{rem}{Remark}
\def\beq{\begin{equation}} % \setcounter{equation}{1}}
\def\eeq{\end{equation}}
\def\beqn{\begin{eqnarray*}}
\def\eeqn{\end{eqnarray*}}
\def\Bitem{\begin{itemize}\setlength{\itemsep}{.2in}}
\def\bitem{\begin{itemize}\setlength{\itemsep}{.05in}}
\def\eitem{\end{itemize}}
\def\Benum{\begin{enumerate}\setlength{\itemsep}{.2in}}
\def\benum{\begin{enumerate}\setlength{\itemsep}{.05in}}
\def\eenum{\end{enumerate}}
\def\bmult{\begin{multline*}}
\def\emult{\end{multline*}}
\def\bcenter{\begin{center}}
\def\ecenter{\end{center}}
\def\bframe{\begin{frame}}
\def\eframe{\end{frame}}
\newcommand{\thmref}[1]{Theorem~\ref{thm:#1}}
\newcommand{\prpref}[1]{Proposition~\ref{prp:#1}}
\newcommand{\lemref}[1]{Lemma~\ref{lem:#1}}
\newcommand{\secref}[1]{Section~\ref{sec:#1}}
\newcommand{\figref}[1]{Figure~\ref{fig:#1}}
\DeclareMathOperator{\diag}{diag}
\def\cH{\mathcal{H}}
\def\cN{\mathcal{N}}
\def\bbI{\mathbb{I}}
\def\bbP{\mathbb{P}}
\def\bbR{\mathbb{R}}
\newcommand{\E}{\operatorname{\mathbb{E}}}
\renewcommand{\P}{\operatorname{\mathbb{P}}}
\newcommand{\Var}{\operatorname{Var}}
\def\iid{\stackrel{\rm iid}{\sim}}
\def\Bin{\text{Bin}}
\def\Unif{\text{Unif}}
\def\weak{\rightharpoonup}
\def\eps{\varepsilon}
\def\iff{\ \Leftrightarrow \ }
\def\1{\mathbbm{1}}
\newcommand{\IND}[1]{\bbI\{ #1 \}}
\definecolor{purple}{rgb}{0.4,.1,.9}
\newcommand\blfootnote[1]{%
\begingroup
\renewcommand\thefootnote{}\footnote{#1}%
\addtocounter{footnote}{-1}%
\endgroup
}
\begin{document}
\thispagestyle{empty}

\title{Detection of Sparse Positive Dependence}
\author{Ery Arias-Castro \and Rong Huang \and Nicolas Verzelen}
\date{}
\maketitle

\begin{abstract}
In a bivariate setting, we consider the problem of detecting a sparse contamination or mixture component, where the effect manifests itself as a positive dependence between the variables, which are otherwise independent in the main component.  We first look at this problem in the context of a normal mixture model.  In essence, the situation reduces to a univariate setting where the effect is a decrease in variance.  In particular, a higher criticism test based on the pairwise differences is shown to achieve the detection boundary defined by the (oracle) likelihood ratio test.
We then turn to a Gaussian copula model where the marginal distributions are unknown.  Standard invariance considerations lead us to consider rank tests.  In fact, a higher criticism test based on the pairwise rank differences achieves the detection boundary in the normal mixture model, although not in the very sparse regime.  We do not know of any rank test that has any power in that regime.
\end{abstract}

\blfootnote{EAC and RH are with the Department of Mathematics, University of California, San Diego. NV is with INRAE, Montpellier SupAgro, MISTEA,
Univ.~Montpellier.}

\section{Introduction} \label{sec:intro}
The detection of rare effects has been an important problem for years in settings, and may be particularly relevant today, for example, with the search for personalized care in the health industry, where a small fraction of a population may respond particularly well, or particularly poorly, to some given treatment \cite{redekop2013faces}.

Following a theoretical investigation initiated in large part by \citet{ingster1997some} and broadened by \citet{Jin:2004fj}, we are interested in studying two-component mixture models, also known as contamination models, in various asymptotic regimes defined by how the small mixture weight converges to zero.
Most of the existing work in the setting of univariate data has focused on models where the contamination manifests itself as a shift in mean \cite{donoho2015higher, donoho2008higher, hall2010innovated, cai2014optimal, moscovich2016exact} with a few exceptions where the effect is a change in variance \cite{AriasCastro:2018wr}, or a change in both mean and variance \cite{Cai:2011cb}.

In the present paper, we are interested in bivariate data, instead, and more specifically in a situation where the effect felt in the dependence between the two variables being measured.
This setting has been recently considered in the literature in the context of assessing the reproducibility of studies.
For example, \citet{Li:2011gg} aimed to identify significant features from separate studies using an expectation-maximization (EM) algorithm. They applied a copula mixture model and assumed that changes in the mean and covariance matrix differentiate the contaminated component from the null component.
\citet{Zhao:9999ku} studied another model where variables from the contamination are stochastically larger marginally. In both models, the marginal distributions have some non-null effects.
Similar settings have been considered within a multiple testing framework \cite{Bogomolov:2018gt, zhao2015false}.

While existing work has focused on models motivated by questions of reproducibility, in the present work we come back to basics and directly address the problem of detecting a bivariate mixture with a component where the variables are independent and a component where the variables are positively dependent.

\subsection{Gaussian Mixture Model}
\citet{ingster1997some} and \citet{Jin:2004fj} started with a mixture of Gaussians, and we do the same, and in our setting, this means we consider the following mixture model
\beq \label{model1}
(X, Y) \sim (1-\eps)\, \cN(0, \I) + \eps\, \cN(0, \Sigma_\rho),
\quad \Sigma_\rho := \begin{pmatrix} 1 & \rho \\ \rho & 1 \end{pmatrix},
\eeq
where $\eps \in [0,1/2)$ is the contamination proportion and $0 \le \rho \le 1$ is the correlation between the two variables under contamination.
We consider the following hypothesis testing problem: based on $(X_1, Y_1), \dots, (X_n, Y_n)$ drawn iid from \eqref{model1}, decide
\beq\label{problem1}
\cH_0: \eps = 0 \quad \text{versus} \quad \cH_1: \eps > 0,\, \rho > 0.
\eeq

Note that under the null hypothesis, $(X, Y)$ is from the bivariate standard normal. Under the alternative, $X$ and $Y$ remain standard normal marginally. Following the literature on the detection of sparse mixtures \cite{ingster1997some, Jin:2004fj}, we are most interested in a situation, asymptotic as $n \to \infty$, where $\eps = \eps_n \to 0$, and the central question is how large $\rho = \rho_n$ needs to be in order to reliability distinguish these hypotheses.

The formulation \eqref{model1} suggests that the alternative hypothesis is composite, but if we assume that $(\eps, \rho)$ are known under the alternative, then the likelihood ratio test (LRT) is optimal by Neyman-Pearson lemma.
We start with characterizing the behavior of the LRT, which provides a benchmark. We then study some other testing procedures that do not require knowledge of the model parameters:\footnote{ Such procedures are said to be {\em adaptive}.}
\bitem
\item The {\em covariance test} rejects for large values of $\sum_i X_i Y_i$, and coincides with Rao's score test in the present context. This is the classical test for independence, specifically designed for the case where $\eps = 1$ and $\rho > 0$ under the alternative. We shall see that it is suboptimal in some regimes.
\item The {\em extremes test} rejects for small values of $\min_i |X_i-Y_i|$.  This test exploits the fact that, because $\rho$ is assumed positive, the variables in the contaminated component are closer to each other than in the null component.
\item The {\em higher criticism test} was suggested by John Tukey and deployed by \citet{Jin:2004fj} for the testing of sparse mixtures.  We propose a version of that test based on the pairwise differences, $U_i := (X_i - Y_i)/\sqrt{2}$. In detail, the test rejects for large values of
\beq \label{HC}
\sup_{u \ge 0} \frac{\sqrt{n}\, (\hat F(u) - \Psi(u))}{\sqrt{\Psi(u)(1-\Psi(u))}},
\eeq
where $\Psi(u) := 2\Phi(u) - 1$, with $\Phi$ denotes the standard normal distribution function, and $\hat F(u) := \frac1n \sum_{i=1}^n \IND{|U_i| \le u}$, the empirical distribution function of $|U_1|, \dots, |U_n|$.
\eitem

As is common practice in this line of work \cite{ingster1997some, Jin:2004fj}, under $\cH_1$ we set
\beq \label{eps}
\eps = n^{-\beta}, \quad \beta \in (0, 1) \text{ fixed}.
\eeq
The setting where $\beta \le 1/2$ is often called the dense regime and the setting where $\beta > 1/2$ is often called the sparse regime.
Our analysis reveals the following:
\renewcommand{\theenumi}{(\alph{enumi})}
\renewcommand{\labelenumi}{\theenumi}
\benum  \setlength{\itemsep}{0in}
\item {\em Dense regime.}
The dense regime is most interesting when $\rho \to 0$.  In that case, we find that the covariance test and the higher criticism test match the asymptotic performance of the likelihood ratio test to first-order, while the extremes test has no power.
\item {\em Sparse regime.}
The sparse regime is most interesting when $\rho \to 1$.  In that case, we find that the higher criticism test still performs as well as the likelihood ratio test to first order, while the covariance test is powerless, and the extremes test is suboptimal.
\eenum

%Our conclusions of the extremes test and the higher criticism test are direct applications of our previous analysis in sparse variance mixture model \cite{AriasCastro:2018wr}. In addition, we explore a testing procedure without specifying the mean and variance under the null.

\subsection{Gaussian Mixture Copula Model}
From a practical point of view, the assumption that both $X$ and $Y$ are normally distributed is quite stringent. Hence, we would like to know if there are nonparametric procedures that do not require such a condition but can still achieve the same performance as the likelihood ratio test.
In the univariate setting where the effect arises as a shift in mean, this was investigated in \cite{AriasCastro:2016is}.
In the bivariate setting, in a model for reproducibility, \citet{Zhao:9999ku} proposed a nonparametric test based on a weighted version of Hoeffding's test for independence.

Here, instead of model \eqref{model1}, we suppose $(X, Y)$  follows a Gaussian mixture copula model (GMCM) \cite{bilgrau2016gmcm}, meaning that there is a latent random vector $(Z^1, Z^2)$ such that
\beq \label{model3}
F(X) = \Phi(Z^1), \quad G(Y) = \Phi(Z^2), \quad
(Z^1, Z^2) \sim (1-\eps) \cN(0, I) + \eps \cN(0, \Sigma_\rho),
\quad \Sigma_\rho := \begin{pmatrix} 1 & \rho \\ \rho & 1 \end{pmatrix},
\eeq
where $F$ and $G$ are unknown distribution functions on the real line, and $\Phi$ is the standard normal distribution function, while $\eps \in [0,1/2)$ is the contamination proportion and $0 \le \rho \le 1$ is the correlation between $Z^1$ and $Z^2$ in the contaminated component, as before in model \eqref{model1}.
\citet{Li:2011gg} also used a copula mixture model, but they placed emphasis on the mean while we focus on the dependence.

We still consider the testing problem \eqref{problem1}, but now in the context of Model \eqref{model3}. The setting is nonparametric in that both $F$ and $G$ are unknown.
Model \eqref{model3} is crafted in such a way that the marginal distributions of $X$ and $Y$ contain absolutely no information that is pertinent to the testing problem under consideration.

%\figref{example} provides an illustration.
%
%\begin{figure}[thpb]
%	\centering
%	\includegraphics[width=0.4\textwidth]{}
%	\caption{A scatterplot of data generated from a Gaussian copula mixture model of the form \eqref{model3}.  Specifically, $n = 1000$, $\eps = n^{-0.4}$, $\rho = 0.9$, $F(x) = x$ and $G(y) = y^3$.}
%	\label{fig:example}
%\end{figure}

The model is also attractive because of an invariance under all increasing marginal transformations of the variables.  This is the same invariance that leads to considering rank based methods such as the Spearman correlation test \cite[Chp 6]{lehmann2006testing}.
In fact, we analyze the Spearman correlation test, which is the nonparametric analog to the covariance test, showing that it is first-order asymptotically optimal in the dense regime. We also propose and analyze a nonparametric version of the higher criticism based on ranks which we show is first-order asymptotically optimal in the moderately sparse regime where $1/2<\beta<3/4$.
In the very sparse regime, where $\beta > 3/4$, we do not know of any rank-based test that has any power.

%Results from numerical experiments are presented at the end of \secref{GMM} and \secref{GMCM}.

\section{Gaussian Mixture Model} \label{sec:GMM}
In this section, we focus on the Gaussian mixture model \eqref{model1}.  We start by deriving a lower bound on the performance of the likelihood ratio test, which provides a benchmark for the other (adaptive) tests, which we subsequently analyze.

We distinguish between the dense and sparse regimes:
\begin{align}
\text{dense regime} & \quad \rho = n^{-\gamma}, \quad \gamma > 0 \text{ fixed}; \label{dense} \\
\text{sparse regime} & \quad \rho = 1 - n^{-\gamma}, \quad \gamma > 0 \text{ fixed}. \label{sparse}
\end{align}

We say that a testing procedure is asymptotically powerful (resp.~powerless) if the sum of its probabilities of Type I and Type II errors (its risk) has limit 0 (resp.~limit inferior at least 1) in the large sample asymptote.

\subsection{The likelihood ratio test}

\begin{thm}
Consider the testing problem \eqref{problem1} with $\eps$ parameterized as in \eqref{eps}.
In the dense regime, with $\rho$ parameterized as in \eqref{dense}, the likelihood ratio test is asymptotically powerless when $\gamma > 1/2 - \beta$.
In the sparse regime, with $\rho$ parameterized as in \eqref{sparse}, the likelihood ratio test is asymptotically powerless when $\gamma < 4(\beta - 1/2)$.
\end{thm}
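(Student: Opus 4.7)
The plan is to bound the total variation distance between the null and alternative via an $L^2$ argument, refined by truncation in the sparse regime. Write $L_n$ for the $n$-observation likelihood ratio and $R(x,y) := \phi_\rho(x,y)/\phi_0(x,y)$, where $\phi_\rho$ and $\phi_0$ are the bivariate densities. A direct Gaussian integration of $\phi_\rho^2/\phi_0$ (the quadratic form in its exponent has unit determinant, so the integral is $(2\pi(1-\rho^2))^{-1} \cdot 2\pi$) gives $\E_0[R^2] = 1/(1-\rho^2)$, hence
\[
\E_0 L_n^2 \;=\; \bigl(1 + \eps^2\rho^2/(1-\rho^2)\bigr)^n \;\le\; \exp\bigl(n\eps^2\rho^2/(1-\rho^2)\bigr).
\]
Powerlessness will follow from the Cauchy--Schwarz bound $d_{\mathrm{TV}}(P_0^n, P_1^n)^2 \le \tfrac{1}{4}(\E_0 L_n^2 - 1)$ once the exponent is $o(1)$.

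\emph{Dense regime.} Substituting $\rho = n^{-\gamma}$ makes the exponent of order $n^{1-2\beta-2\gamma}$, which vanishes iff $\gamma > 1/2 - \beta$.

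\emph{Sparse regime.} The same bound yields only $\gamma < 2\beta - 1$, so a truncation is needed to reach $\gamma < 4\beta - 2$. Passing to the rotated coordinates $u := (X-Y)/\sqrt 2$ and $v := (X+Y)/\sqrt 2$ (iid $\cN(0,1)$ under $P_0$; independent $\cN(0,1-\rho)$ and $\cN(0,1+\rho)$ under $\phi_\rho$),
\[
\log R \;=\; -\tfrac{1}{2}\log(1-\rho^2) - \tfrac{\rho}{2(1-\rho)}u^2 + \tfrac{\rho}{2(1+\rho)}v^2,
\]
shows the heavy right tail of $R$ under $P_0$ comes \emph{entirely} from the $+v^2$ term, while under $\phi_\rho$ the variable $v$ remains $\cN(0,1+\rho) = O(1)$ and so carries no signal. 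Accordingly, fix $c > 2\sqrt{1-\beta}$, set $v_\star := c\sqrt{\log n}$, $E := \{|v|\le v_\star\}$, and define the modified alternative
\[
\tilde P_1 \;:=\; (1-\eps q)\,\phi_0 + \eps\,\phi_\rho\IND{E}, \qquad q := \Pr_\rho[E],
\]
which is a valid probability density. The triangle inequality and tensorization give $d_{\mathrm{TV}}(P_0^n, P_1^n) \le d_{\mathrm{TV}}(P_0^n, \tilde P_1^n) + n \cdot d_{\mathrm{TV}}(\tilde P_1, P_1)$.

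For the second term, a direct computation using $\tilde P_1 - P_1 = \eps(1-q)\phi_0 - \eps\phi_\rho\IND{E^c}$ yields $d_{\mathrm{TV}}(\tilde P_1, P_1) \le \eps\Pr_\rho[|v|>v_\star]$; a standard Gaussian tail estimate (using $1+\rho \le 2$) gives $\Pr_\rho[|v|>v_\star] = O(n^{-c^2/4})$ up to polylogs, so that $n\cdot d_{\mathrm{TV}}(\tilde P_1, P_1) = O(n^{1-\beta-c^2/4}) \to 0$ by the choice of $c$. For the first term, the modified LR $\tilde L_1 := \tilde P_1/\phi_0 = 1 - \eps q + \eps R\IND{E}$ satisfies $\E_0\tilde L_1 = 1$ and $\mathrm{Var}_0(\tilde L_1) \le \eps^2 \E_0[R^2\IND{E}]$; because $R^2$ factorizes across $(u,v)$, the integral splits into two one-dimensional Gaussian integrals, producing
\[
\E_0\bigl[R^2\IND{|v|\le v_\star}\bigr] \;=\; \frac{2\Phi(\mu) - 1}{1-\rho^2}, \qquad \mu := v_\star\sqrt{(1-\rho)/(1+\rho)}.
\]
Since $\mu \to 0$ in the sparse regime, $2\Phi(\mu)-1 \le \mu\sqrt{2/\pi}$, giving $n\eps^2\,\E_0[R^2\IND E] = O(\sqrt{\log n}\cdot n^{1-2\beta+\gamma/2})$, which is $o(1)$ exactly when $\gamma < 4\beta - 2$. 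Cauchy--Schwarz then yields $d_{\mathrm{TV}}(P_0^n, \tilde P_1^n) \to 0$, closing the argument.

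\emph{Main obstacle.} The conceptually nontrivial step is identifying the right direction of truncation. A naive truncation on $R$ itself (or on $|u|$) fails, because the alternative concentrates precisely where $R$ is largest, so discarding those regions destroys the signal and forces $d_{\mathrm{TV}}(\tilde P_1, P_1) \to 1$. The correct truncation is in the orthogonal $|v|$-direction, where the null exhibits the heavy tail of $R$ but the alternative carries no information; verifying that this choice delivers exactly the claimed boundary $\gamma = 4\beta - 2$ is the heart of the proof.
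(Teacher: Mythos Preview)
Your argument is correct and follows essentially the same route as the paper: second-moment control of the likelihood ratio in the dense regime, and in the sparse regime a truncation on $|v|=|X-Y|/\sqrt{2}$ (the direction where $R$ has a heavy tail under $P_0$ but $\phi_\rho$ carries no asymptotic mass) followed by a second-moment computation that factorizes across $(u,v)$ and produces exactly the $n^{1-2\beta+\gamma/2}$ rate.

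The only packaging difference is how the truncation is organized. The paper sets $\bar L_i := L_i\,\IND{|V_i|\le\sqrt{2\log n}}$, so $\bar L$ is no longer a likelihood ratio ($\E_0\bar L<1$), and then controls both $\E_0\bar L$ and $\E_0\bar L^2$ via the inequality
\[
\E_0|L-1|\;\le\;\bigl[\E_0\bar L^2 - 1 + 2(1-\E_0\bar L)\bigr]^{1/2} + (1-\E_0\bar L).
\]
You instead re-normalize the truncated alternative into a genuine probability measure $\tilde P_1$, which makes $\E_0\tilde L_1=1$ exactly and lets you apply Cauchy--Schwarz cleanly, at the cost of an extra step bounding $n\cdot d_{\mathrm{TV}}(\tilde P_1,P_1)$. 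Both variants are standard and lead to the same computation; your choice of threshold $c\sqrt{\log n}$ with $c>2\sqrt{1-\beta}$ is slightly sharper than the paper's $\sqrt{2\log n}$ but this plays no role at the polynomial scale considered.
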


This only provides a lower bound on what can be achieved, but it will turn out that to be sharp once we establish the performance of the higher criticism test in \prpref{hc} below.

\begin{proof}
The proof techniques are standard and already present in \cite{donoho2015higher,ingster1997some}, and many of the subsequent works.

Defining $U := (X-Y)/\sqrt{2}$ and $V := (X+Y)/\sqrt{2}$, the model \eqref{model1} is equivalently expressed in terms of $(U, V)$, which has distribution
\beq \label{model2}
(U, V) \sim (1-\eps) \cN(0, \I) + \eps \cN(0, \Delta_\rho), \quad \Delta_\rho := \diag(1-\rho, 1+\rho).
\eeq
Note that $U$ and $V$ are independent only conditional on knowing what distribution they were sampled from.
In terms of the $(U,V)$'s, the likelihood ratio is
\beq
L := \prod_{i=1}^n L_i,
\eeq
where $L_i$ is the likelihood ratio for observation $(U_i,V_i)$, which in the present case takes the following expression
\begin{align}
L_i
&= \frac{\frac{1 - \eps}{2\pi} \exp(-\frac12 U_i^2 - \frac12 V_i^2) + \frac\eps{2\pi \sqrt{1-\rho^2}} \exp(-\frac1{2(1-\rho)} U_i^2 -\frac1{2(1+\rho)} V_i^2)}{\frac1{2\pi} \exp(-\frac12 U_i^2 - \frac12 V_i^2)} \\
&= 1 -\eps + \eps (1-\rho^2)^{-1/2} \exp(-\tfrac\rho{2(1-\rho)} U_i^2 +\tfrac\rho{2(1+\rho)} V_i^2).
\end{align}
The risk of the likelihood ratio test is equal to \cite[Problem 3.10]{lehmann2006testing}
\beq
{\rm risk}(L) := 1 - \frac12 \E_0[ |L - 1| ].
\eeq
We show that ${\rm risk}(L) = 1 + o(1)$ under each of the stated conditions.  We consider each regime in turn.

\medskip\noindent
{\em Dense regime.}
It turns out that it suffices to bound the second moment.  Indeed, using the Cauchy-Schwarz inequality, we have
\beq
{\rm risk}(L) \ge 1 - \frac12 \sqrt{\E_0[L^2] - 1},
\eeq
reducing the task to showing that $\E_0[L^2] \le 1 + o(1)$.
We have
\beq
\E_0[L^2]
= \prod_{i=1}^n \E_0[L_i^2]
= (\E_0[L_1^2])^n
\eeq
where
\begin{align}
\E_0[L_1^2]
&= \E_0\Big[\Big(1 -\eps + \eps (1-\rho^2)^{-1/2} \exp(-\tfrac\rho{2(1-\rho)} U_1^2 +\tfrac\rho{2(1+\rho)} V_1^2)\Big)^2\Big] \\
&= (1 -\eps)^2 + 2 (1-\eps) \eps
+ \eps^2 (1-\rho^2)^{-1} \E_0 \big [\exp(-\tfrac\rho{(1-\rho)} U_1^2) \big ] \E_0 \big [\exp(\tfrac\rho{(1+\rho)} V_1^2) \big ]\\
&= 1 - \eps^2 + \eps^2 (1-\rho^2)^{-1} \E_0 \big [\exp(-\tfrac\rho{(1-\rho)} U_1^2) \big ] \E_0 \big [\exp(\tfrac\rho{(1+\rho)} V_1^2) \big].
\end{align}
For the third term, we have
\beq
\E_0 \big [\exp(-\tfrac\rho{(1-\rho)} U_1^2) \big]
= \frac1{\sqrt{2\pi}} \int_{-\infty}^\infty e^{-\frac\rho{1-\rho} u^2 - \frac12 u^2} {\rm d}u
= \sqrt{\frac{1-\rho}{1+\rho}},
\eeq
and
\beq
\E_0 \big [\exp(\tfrac\rho{(1+\rho)} V_1^2) \big]
= \frac1{\sqrt{2\pi}} \int_{-\infty}^{\infty} e^{\frac\rho{1+\rho} v^2 - \frac12 v^2} {\rm d}v
=\sqrt{\frac{1+\rho}{1-\rho}}.
\eeq
Hence, we have
\beq
\E_0[L_1^2] = 1 + \eps^2 \rho^2 / (1-\rho^2),
\eeq
and, therefore,
\beq
\E_0[L^2]
= \big[1 + \eps^2 \rho^2 / (1-\rho^2) \big]^n
\le \exp\big[n \eps^2 \rho^2 / (1-\rho^2)\big],
\eeq
so that $\E_0[L^2] \le 1 + o(1)$ when
\beq \label{lb1}
n \eps^2 \rho^2 = o(1),
\eeq
since $\rho$ is assumed to be bounded away from 1.
Under the specified parameterization, this happens exactly when $\gamma > 1/2 - \beta$.

\medskip\noindent
{\em Sparse regime.}
It turns out that simply bounding the second moment, as we did above, does not suffice.  Instead, we truncate the likelihood and study the behavior of its first two moments.
Define the indicator variable $D_i = \IND{|V_i| \le \sqrt{2 \log n}}$ and the corresponding truncated likelihood ratio
\beq
\bar L = \prod_{i=1}^n \bar L_i, \quad \bar L_i := L_i D_i.
\eeq
Using the triangle inequality, the fact that $\bar L\leq L$, and the Cauchy-Schwarz inequality, we have the following upper bound:
\begin{align}
\E_0|L-1|
&\le \E_0|\bar L-1|+\E_0 (L-\bar L) \\
&\le \big[ \E_0[\bar L^2] - 1 + 2 (1 - \E_0[\bar L]) \big]^{1/2} + (1-\E_0[\bar L]) \ ,
\end{align}
so that ${\rm risk}(L) = 1 + o(1)$ when $\E_0[\bar L^2] \le 1 + o(1)$ and $\E_0[\bar L] \ge 1 - o(1)$.

For the first moment, we have
\beq
\E_0[\bar L]
= \prod_{i=1}^n \E_0[\bar L_i]
= (\E_0[\bar L_1])^n
\eeq
where, using the independence of $U_1$ and $V_1$, and taking the expectation with respect to $U_1$ first,
\begin{align}
\E_0[\bar L_1]
&= \E_0\Big[\Big(1 -\eps + \eps (1+\rho)^{-1/2} \exp(\tfrac\rho{2(1+\rho)} V_1^2)\Big) D_1\Big] \\
&= (1 -\eps) \Psi(\sqrt{2 \log n}) + \eps \Psi(\sqrt{2 \log n}/\sqrt{1+\rho}) \\
&= (1 -\eps) (1 - O(n^{-1}/\sqrt{\log n})) + \eps (1 - O(n^{-1/(1+\rho)}/\sqrt{\log n})) \\
&= 1 - o(1/n) - o(\eps n^{-1/(1+\rho)}),
\end{align}
where, for $t \ge 0$,
\beq \label{def:psi}
\Psi(t) = \bbP(|\cN(0,1)| \le t) = 2\Phi(t) - 1 = \int_{-t}^t \frac{e^{-s^2/2}}{\sqrt{2\pi}} {\rm d}s,
\eeq
and we used the fact that $1 - \Psi(t) \asymp e^{-t^2/2}/t$ when $t \to \infty$.
Since $\eps = n^{-\beta}$ with $\beta > 1/2$ in the sparse regime, for $\rho$ sufficiently close to 1, $\eps n^{-1/(1+\rho)} \le 1/n$, in which case $\E_0[\bar L_1] \ge 1 - o(1/n)$.
This yields
\beq
\E_0[\bar L] \ge (1 - o(1/n))^n = 1 - o(1).
\eeq

For the second moment, we have
\beq
\E_0[\bar L^2]
= \prod_{i=1}^n \E_0[\bar L_i^2]
= \E_0[\bar L_1^2]^n,
\eeq
where
\begin{align}
\E_0[\bar L_1^2]
&= \E_0\Big[\Big(1 -\eps + \eps (1-\rho^2)^{-1/2} \exp(-\tfrac\rho{2(1-\rho)} U_1^2 +\tfrac\rho{2(1+\rho)} V_1^2)\Big)^2 D_1\Big] \\
&= (1 -\eps)^2 \Psi(\sqrt{2 \log n}) + 2 (1-\eps) \eps \Psi(\sqrt{2 \log n}/\sqrt{1+\rho}) \\
&\quad + \eps^2 (1-\rho^2)^{-1} \E_0[\exp(-\tfrac\rho{(1-\rho)} U_1^2)] \E_0[\exp(\tfrac\rho{(1+\rho)} V_1^2) D_1].
\end{align}
The sum of first two terms is bounded from above by $(1-\eps)^2 + 2 (1-\eps) \eps = 1 - \eps^2$.
For the third term, we have
\beq
\E_0[\exp(-\tfrac\rho{(1-\rho)} U_1^2)]
= \frac1{\sqrt{2\pi}} \int_{-\infty}^\infty e^{-\frac\rho{1-\rho} u^2 - \frac12 u^2} {\rm d}u
= \sqrt{\frac{1-\rho}{1+\rho}},
\eeq
and
\beq
\E_0[\exp(\tfrac\rho{(1+\rho)} V_1^2) D_1]
= \frac1{\sqrt{2\pi}} \int_{-\sqrt{2\log n}}^{\sqrt{2\log n}} e^{\frac\rho{1+\rho} v^2 - \frac12 v^2} {\rm d}v
\le \frac1{\sqrt{2\pi}} 2 \sqrt{2\log n},
\eeq
using the fact that $\rho \le 1$.
Hence,
\begin{align}
\E_0[\bar L_1^2]
&\le 1 -\eps^2 + \eps^2 (1-\rho^2)^{-1} \sqrt{\frac{1-\rho}{1+\rho}} \frac1{\sqrt{2\pi}} 2 \sqrt{2\log n} \\
&\le 1 + \eps^2 (1-\rho)^{-1/2} (\log n)^{1/2},
\end{align}
when $\rho$ is sufficiently close to 1.
This in turn yields the following bound
\beq
\E_0[\bar L^2]
\le \big[1 + \eps^2 (1-\rho)^{-1/2} (\log n)^{1/2}\big]^n
\le \exp\big[n \eps^2 (1-\rho)^{-1/2} (\log n)^{1/2}\big],
\eeq
so that $\E_0[\bar L^2] \le 1 + o(1)$ when
\beq \label{lb2}
n \eps^2 (1-\rho)^{-1/2} (\log n)^{1/2} = o(1).
\eeq
Under the specified parameterization, this happens exactly when $\gamma < 4\beta -2$.
\end{proof}

In the dense regime, with $\rho$ parameterized as in \eqref{dense}, we say that a test achieves the detection boundary if it is asymptotically powerful when $\gamma < 1/2 - \beta$, and in the sparse regime, with $\rho$ parameterized as in \eqref{sparse}, we say that a test achieves the detection boundary if it is asymptotically powerful when $\gamma > 4(\beta - 1/2)$.

\subsection{The covariance test}
\label{sec:cov}

Recall that the covariance test rejects for large values of $T_n := \sum_{i=1}^n X_i Y_i$, calibrated under the null where $X_1, \dots, X_n, Y_1, \dots, Y_n$ are iid standard normal.

\begin{prp}\label{prp:sum}
For the testing problem \eqref{problem1}, the covariance test achieves the detection boundary in the dense regime, while it is asymptotically powerless in the sparse regime.
\end{prp}

\begin{proof}
We divide the proof into the two regimes.

\medskip\noindent
{\em Dense regime.} \quad
Under $\cH_0$, we have
\begin{align}
\E_0(T_n) &= n\E_0(X_1 Y_1) = n\E_0(X_1)\E_0(Y_1) = 0, \\
\Var_0(T_n) &= n\Var_0(X_1 Y_1) =  n \E_0(X_1^2) \E_0(Y_1^2) = n,
\end{align}
so that, by Chebyshev's inequality,
\beq
\P_0(|T_n| \ge a_n \sqrt{n}) \to 0,
\eeq
for any sequence $(a_n)$ diverging to infinity.

Under $\cH_1$, we have
\begin{align}
\E_1(T_n) &= n\E_1(X_1 Y_1) = n \eps \rho, \\
%\E_1(X_1 Y_1)^2  = (1-\eps) + \eps (1+2\rho^2) = 1 + 2\eps \rho^2,
\Var_1(T_n) &= n\Var_1(X_1 Y_1) =  n (1 + 2\eps \rho^2 - \eps^2 \rho^2) \le 3n,
\end{align}
so that, by Chebyshev's inequality,
\beq
\P_1(|T_n - n \eps \rho| \ge a_n \sqrt{n}) \to 0.
\eeq

Thus the test with rejection region $\{T_n \ge a_n \sqrt{n}\}$ is asymptotically powerful when
\beq
\sqrt{n} \eps \rho \ge 2 a_n.
\eeq
If we choose $a_n = \log n$, for example, and $\rho$ is parameterized as in \eqref{dense}, this happens for $n$ large enough when $\gamma < 1/2 - \beta$.

\medskip\noindent
{\em Sparse regime.} \quad
To prove that the covariance test is asymptotically powerless when $\beta > 1/2$, we show that, under $\cH_1$, $T_n$ converges to the same limiting distribution as under $\cH_0$.

Under $\cH_0$, by the central limit theorem,
\beq
\frac{T_n}{\sqrt{n}} \weak  \cN(0,1).
\eeq

Under $\cH_1$ the distribution of the $(X_i,Y_i)$'s (which remain iid) depends on $n$, but the condition for applying Lyapunov's central limit theorem are satisfied since
\beq
\E_1 [(X_i Y_i - \eps \rho)^4]
\le 8 (\E_1[(X_i Y_i)^4] + (\eps \rho)^4),
\eeq
with $(\eps \rho)^4 \le 1$ and
\beq
\E_1[(X_i Y_i)^4]
\le \big[\E_1(X_i^8)\E_1(Y_i^8)\big]^{1/2}
= \E(Z^8) = {\rm const},
\eeq
where $Z \sim \cN(0,1)$ and the inequality is Cauchy-Schwarz's,
while
\beq
\Var_1(X_i Y_i)
= 1 + 2\eps \rho^2 - \eps^2 \rho^2
\ge 1,
\eeq
so that the test statistic still converges weakly to a normal distribution,
\beq
\frac{T_n - \E_1(T_n)}{\sqrt{\Var_1(T_n)}} \weak \cN(0,1).
\eeq
In the present regime, we have
\beq
\E_1(T_n) = n \eps \rho, \quad
\Var_1(T_n) = n (1 + 2\eps \rho^2 - \eps^2 \rho^2),
\eeq
so that $\E_1(T_n)/\sqrt{\Var_1(T_n)} \to 0$ and $\Var_1(T_n) \sim n$, and thus we conclude by Slutsky's theorem that $T_n/\sqrt{n} \weak \cN(0,1)$.
\end{proof}

\begin{rem}
There are good reasons to consider the covariance test in this specific form since the means and variances are known.  It is worth pointing out that the Pearson correlation test, which is more standard in practice since it does not require knowledge of the means or variances, has the same asymptotic power properties.
\end{rem}

\subsection{The higher criticism test and the extremes test}
Define $U_i = (X_i - Y_i)/\sqrt{2}$, and note that
\beq
U_1, \dots, U_n \iid (1-\eps)\, \cN(0,1) + \eps\, \cN(0, 1-\rho).
\eeq
Seen through the $U_i$'s, the problem becomes that of detecting a sparse contamination where the effect is in the variance.  We recently studied this problem in detail \cite{AriasCastro:2018wr}, extending previous work by \citet{Cai:2011cb}, who considered a setting where the effect is both in the mean and variance.
Borrowing from our prior work, we consider a higher criticism test, already defined in \eqref{HC}, and an extremes test, which rejects for small values of $\min_i |U_i|$.

\begin{prp}\label{prp:hc}
For the testing problem \eqref{problem1}, the higher criticism test achieves the detection boundary in the dense and sparse regimes.
\end{prp}

\begin{proof}
Set $\sigma^2 = 1-\rho$, which is the variance of the contaminated component.  In our prior work \cite[Prop 3]{AriasCastro:2018wr}, we showed that the higher criticism test as defined in \eqref{HC} is asymptotically powerful when
\benum
\item $\sigma^2 = n^{-\gamma}$ with $\gamma > 0$ fixed such that $\gamma > 4(\beta - 1/2)$;
\item $|\sigma^2 - 1| = n^{-\gamma}$ with $\gamma > 0$ fixed such that $\gamma < 1/2 - \beta$.
%\item  $\beta > 1/2$ and $\sigma > 1/\sqrt{1 - \beta}$ is fixed, or $\beta < 1/2$ and $\sigma \ne 1$ is fixed.
\eenum
This can be directly translated into the present setting, yielding the stated result.
\end{proof}

\begin{prp}\label{prp:extremes}
For the testing problem \eqref{problem1}, the extremes test is asymptotically powerless when $\rho$ is bounded away from~1, while when $\eps$ parameterized as in \eqref{eps} and $\rho$ parameterized as in \eqref{sparse}, it is asymptotically powerful when $\gamma > 2\beta$, and asymptotically powerless when $\gamma < 2\beta$.
\end{prp}

\begin{proof}
This is also a direct corollary from our prior work our prior work \cite[Prop 2]{AriasCastro:2018wr}.
\end{proof}

Thus the extremes test is grossly suboptimal in the dense regime, while it is suboptimal in the sparse regime due to the fact that $2\beta - 4(\beta-1/2) = 2 - 2\beta > 0$.

\begin{rem}
The higher criticism and extremes tests are both based on the $U_i$'s.  This was convenient as it reduced the problem of testing for independence to the problem of testing for a change in variance (both in a contamination model). However, reducing  the original data, meaning the $(X_i, Y_i)$'s, to the $U_i$'s implies a loss of information.  Indeed, a lossless reduction would be from the $(X_i, Y_i)$'s to the $(U_i,V_i)$'s, where $V_i := (X_i+Y_i)/\sqrt{2}$, with joint distribution given in \eqref{model2}. It just turns out that ignoring the $V_i$'s does not lead to any loss in first-order asymptotic power.
\end{rem}

%\subsection{plugHC}
%We further extend our model as follows:
%\beq \label{model4}
%(X, Y) \sim (1-\eps) \cN \Big (\begin{pmatrix} \mu_X \\ \mu_Y \end{pmatrix}, \begin{pmatrix} \sigma^2_X & 0 \\ 0 & \sigma^2_Y \end{pmatrix} \Big) + \eps \cN \Big(\begin{pmatrix} \mu_X \\ \mu_Y \end{pmatrix}, \begin{pmatrix} \sigma^2_X & \rho \sigma_X \sigma_Y \\ \rho \sigma_X \sigma_Y & \sigma^2_Y \end{pmatrix} \Big).
%\eeq
%As we do not specify model parameters $(\mu_X, \mu_Y, \sigma^2_X, \sigma^2_Y)$, we explore a procedure plugHC that does not require knowledge of $(\mu_X, \mu_Y, \sigma^2_X, \sigma^2_Y, \eps, \rho)$. Before we apply the higher criticism test, we estimate $(\mu_X, \mu_Y, \sigma^2_X, \sigma^2_Y)$ from the empirical marginal distributions.

\subsection{Numerical experiments}
\label{sec:param_numerics}
We performed some numerical experiments to investigate the finite sample performance of the tests considered here: the likelihood ratio test, the Pearson correlation test (instead of the covariance test from a practical point of view), the extremes test, the higher criticism test, and also a plug-in version of the higher criticism test where the parameters of the bivariate normal distribution (the two means and two variances) are estimated under the null.
The sample size $n$ is set large to $n = 10^6$ in order to capture the large-sample behavior of these tests. We tried four sparsity levels, setting $\beta \in \{0.2, 0.4, 0.6, 0.8\}$.
The p-values for each test are computed as follows:
\renewcommand{\theenumi}{(\alph{enumi})}
\renewcommand{\labelenumi}{\theenumi}
\benum  \setlength{\itemsep}{0in}
\item For the {\em likelihood ratio test}, the p-values are estimated based on $10^3$ permutations.
\item For the {\em higher criticism test} and the {\em plug-in higher criticism test}, the p-values are estimated based on 200 permutations.
\item For the {\em extremes test}, we used the exact null distribution, which is available in a closed form.
\item For the {\em Pearson correlation test}, the p-values are from the limiting distribution.
\eenum

For each scenario, we repeated the process 200 times and calculated the fraction of p-values smaller than 0.05, representing the empirical power at the 0.05 level.

The results of this experiment are reported in \figref{numeric1} and are broadly consistent with the theory developed earlier in this section. Though we show that the higher criticism test is first-order comparable to the likelihood ratio test in the dense regime, even with a large sample, its power is much lower.  The Pearson correlation test does better in that regime.
The plug-in higher criticism test has a similar performance as the higher criticism test in the dense regime, while it loses some power in the moderately sparse regime, and is powerless in the very sparse regime.

\begin{figure}[ht!]
\centering	
\subfigure[$\beta=0.2$]{
\includegraphics[width=0.45\textwidth]{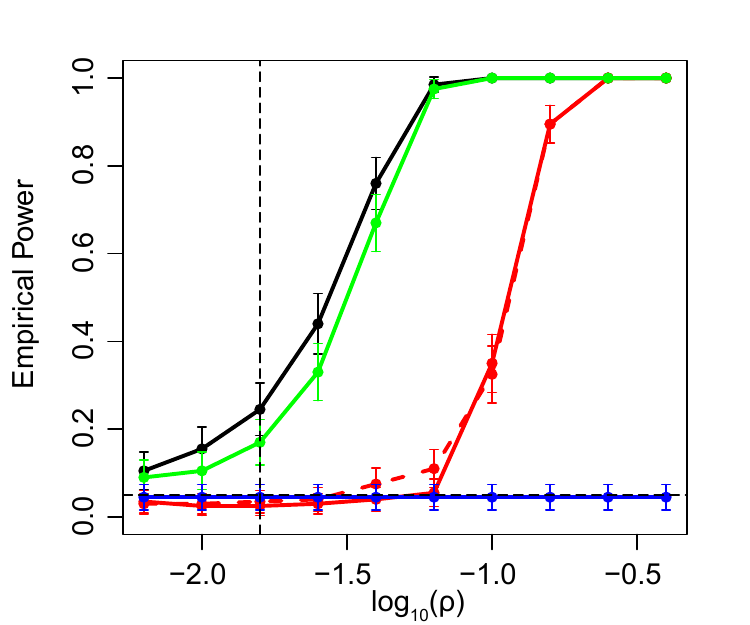}
\label{fig:subfig1}
}	
\subfigure[$\beta=0.4$]{
\includegraphics[width=0.45\textwidth]{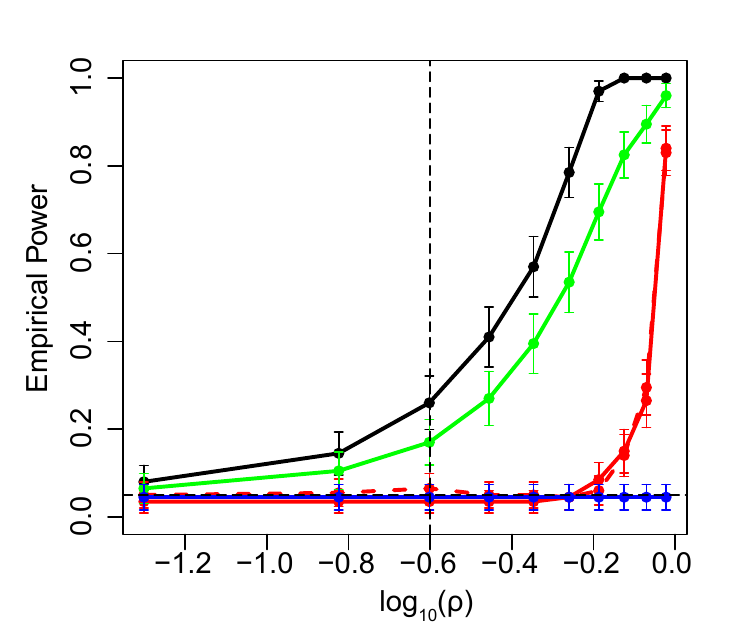}
\label{fig:subfig2}
}
\subfigure[$\beta=0.6$]{
\includegraphics[width=0.45\textwidth]{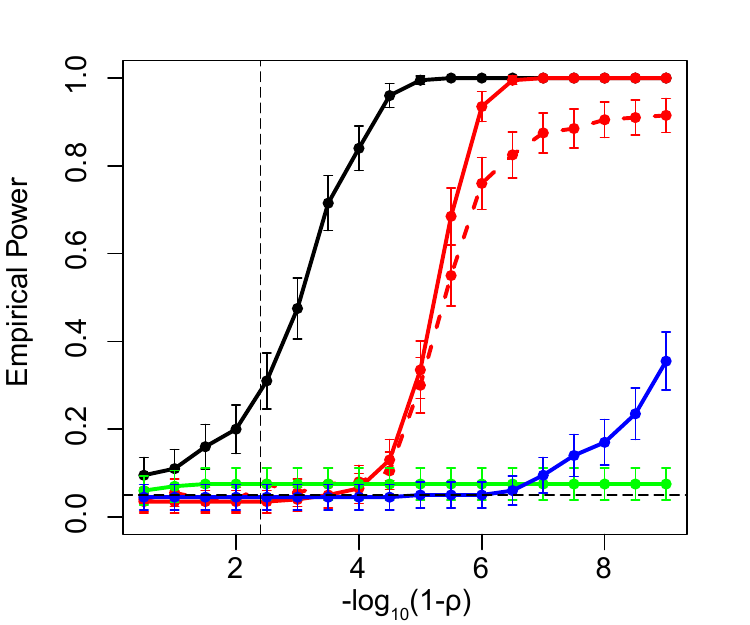}
\label{fig:subfig3}
}
\subfigure[$\beta=0.8$]{
\includegraphics[width=0.45\textwidth]{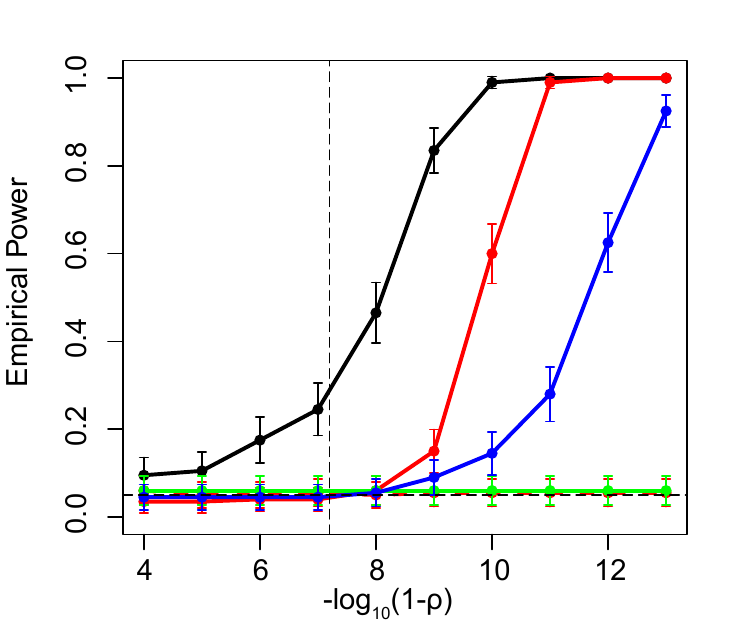}
\label{fig:subfig4}
}
\caption{Empirical power comparison with 95\% error bars for the likelihood ratio test (black), the Pearson correlation test (green), the extremes test (blue), the higher criticism test (red, solid) and the plug-in higher criticism test (red, dashed). \subref{fig:subfig1} Dense regime where $\beta=0.2$. \subref{fig:subfig2} Dense regime where $\beta=0.4$. \subref{fig:subfig3} Sparse regime where $\beta=0.6$ and $\rho \to 1$. \subref{fig:subfig4} Sparse regime where $\beta=0.8$ and $\rho \to 1$. The horizontal line marks the level (set at 0.05) and the vertical line marks the asymptotic detection boundary derived earlier.  The sample size is $n =10^6$ and the power curves and error bars are based on 200 replications.}
\label{fig:numeric1}
\end{figure}

\section{Gaussian Mixture Copula Model} \label{sec:GMCM}
In this section we turn to the Gaussian mixture copula model introduced in \eqref{model3}.
The setting is thus nonparametric, since the marginal distributions are completely unknown, and standard invariance considerations \cite[Ch 6]{lehmann2006testing} lead us to consider test procedures that are based on the ranks.  For this, we let $R_i$ denote the rank of $X_i$ among $\{X_1, \dots, X_n\}$, and similarly, we let $S_i$ denote the rank of $Y_i$ among $\{Y_1, \dots, Y_n\}$.  (The ranks are in increasing order, say.)

Although not strictly necessary, we will assume that $F$ and $G$ in \eqref{model3} are strictly increasing and continuous.  In that case, the ranks are invariant with respect to transformations of the form $(x,y) \mapsto (p(x), q(y))$ with $p$ and $q$ strictly increasing on the real line.  In particular, for the rank tests that follow, this allows us to reduce their analysis under \eqref{model3} to their analysis under \eqref{model1}.

\subsection{The covariance rank test}
The covariance rank test is the analog of the covariance test of \secref{cov}. It rejects for large values of $T_n := \sum_i R_i S_i$ (redefined).  As is well-known, this is equivalent to rejecting for large values of the Spearman rank correlation.

\begin{prp} \label{prp:spearman}
For the testing problem \eqref{problem1} under the model \eqref{model3}, the covariance rank test achieves the detection boundary in the dense regime, while it is asymptotically powerless in the sparse regime.
\end{prp}

%We anticipate that the covariance rank test is asymptotically powerless in the sparse regime, although we do not formally prove that.

\begin{proof}
We again divide the proof into the two regimes.

\medskip\noindent
{\em Dense regime.} \quad 	
We start by considering the null hypothesis $\cH_0$.
From \cite[Eq 3.11-3.12, Ch 11]{gibbons2003nonparametric}, we have
\begin{align}
\E_0(T_n) &= n (n+1)^2/4 = n^3/4 + O(n^2), \\
\Var_0(T_n) &= n^2 (n-1)(n+1)^2/144 \asymp n^5, \label{T-var}
\end{align}
so that, using Chebyshev's inequality,
\beq
\P_0(T_n \ge n^3/4 + a_n n^{5/2}) \to 0,
\eeq
for any sequence $(a_n)$ diverging to infinity.

We now turn to the alternative hypothesis $\cH_1$.
For convenience, we assume that the ranks run from $0$ to $n-1$.  This does not change the test procedure since $T_n = -\frac12 \sum_i (R_i - S_i)^2 + {\rm const}$, but makes the derivations somewhat less cumbersome.
In particular, we have
\begin{align}
R_i &= \sum_{j = 1}^n A_{ij}, \quad A_{ij} := \IND{X_i > X_j}, \\
S_i &= \sum_{j = 1}^n B_{ij}, \quad B_{ij} := \IND{Y_i > Y_j},
\end{align}
so that
\beq
T_n = \sum_{i=1}^n \sum_{j=1}^n \sum_{k=1}^n A_{ij} B_{ik}.
\eeq 	
For the expectation, we have
\begin{align}
\E_1(T_n)
&= n(n-1)(n-2) \E_1[A_{12} B_{13}] + O(n^2) \\
&= n^3 \E_1[A_{12} B_{13}] + O(n^2).
\end{align}
The expectation is with respect to $(X_1,Y_1), X_2, Y_3$ independent, with $(X_1,Y_1)$ drawn from the mixture \eqref{model1}, and $X_2$ and $Y_3$ standard normal.
Let $U = (X_1 - X_2)/\sqrt{2}$ and $V = (Y_1 - Y_3)/\sqrt{2}$, so that $\E_1[A_{12} B_{13}] = \P_1(U > 0, V > 0)$.
We note that $(U,V)$ is bivariate normal with standard marginals.  Moreover, when $(X_1, Y_1)$ comes from the main component, $U$ and $V$ are uncorrelated, and therefore independent; while when $(X_1, Y_1)$ comes from the contaminated component, $U$ and $V$ have correlation $\rho/2$.
Therefore,
\begin{align}
\E_1[A_{12} B_{13}]
&= (1-\eps) \Lambda(0) + \eps \Lambda(\rho/2),
\end{align}
where $\Lambda(\rho) = \P(U > 0, V > 0)$ under $(U,V) \sim \cN(0, \Sigma_\rho)$.  We immediately have $\Lambda(0) = 1/4$, and in general,\footnote{ This identity is well-known, and not hard to prove (\url{https://math.stackexchange.com/questions/255368/getting-px0-y0-for-a-bivariate-distribution}).  It also appears, for example, in \cite[Lem 1]{Xu:2013ip}.}
\beq
\Lambda(\rho) = \frac14 + \frac1{2\pi} \sin^{-1}(\rho).
\eeq
We conclude that
\begin{align}
\E_1(T_n)
&= n^3 \big[\tfrac14 + \tfrac{1}{2\pi} \eps \sin^{-1}(\rho/2)\big] + O(n^2) \\
&\ge \tfrac14 n^3 + \tfrac1{4\pi} n^3 \eps\rho  + O(n^2),
\end{align}
using the fact that $\sin^{-1}(a) \ge a$ for all $a \ge 0$.
For the variance, we start with the second moment
\begin{align}
\E_1(T_n^2)
&= n (n-1) \cdots (n-5) \E_1[A_{12} B_{13} A_{45} B_{46}] + O(n^5) \\
&= n^6 \E_1[A_{12} B_{13} A_{45} B_{46}] + O(n^5),
\end{align}
which then implies that
\begin{align}
\Var_1(T_n)
&= n^6 \E_1[A_{12} B_{13} A_{45} B_{46}] + O(n^5) - \Big[n^3 \E_1[A_{12} B_{13}] + O(n^2)\Big]^2 \\
&=O(n^5),
\end{align}
the same bound we had for $\Var_0(T_n)$.
Thus, by Chebyshev's inequality, we have
\beq
\P_1\big(T_n \le \tfrac14 n^3 + \tfrac1{4\pi} n^3 \eps \rho - a_n n^{5/2}\big) \to 0,
\eeq
for any sequence $(a_n)$ diverging to infinity.

We consider the test with rejection region $\{T_n \ge n^3/4 + a_n n^{5/2}\}$.
Our analysis implies that this test is asymptotically powerful when
\beq
n^3 \eps\rho/4\pi \ge 2 a_n n^{5/2},
\eeq
If we choose $a_n = \log n$, for example, and $\rho$ is parameterized as in \eqref{dense}, this happens for $n$ large enough when $\gamma < 1/2 - \beta$.

\medskip\noindent
{\em Sparse regime.}\quad 
To prove that the covariance rank test is asymptotically powerless when $\beta > 1/2$, similarly as the covariance test, we show that, under $\cH_1$, $T_n$ converges to the same limiting distribution as under $\cH_0$. Under $\cH_0$, we have \cite[Ch 11]{gibbons2003nonparametric},
\beq \label{h0_rank_corr}
\frac{T_n - \zeta_n}{\tau_n} \weak  \cN(0,1), \quad n \to \infty,
\eeq
where $\zeta_n := \E_0(T_n)$ and $\tau_n^2 := \Var_0(T_n)$.
We place ourselves under $\cH_1$, and show that \eqref{h0_rank_corr} continues to hold. For this we use a simple coupling. We couple $T_n$ with a new statistic $T_n'$, defined just like $T_n$, except that, for each pair $(X_i, Y_i)$ drawn from the contaminated component, we replace $Y_i$ by $Y_i' \sim \cN(0,1)$ independent of $X_i$ and any other variable.
Let $M$ denote the number of pairs drawn from the contaminated component, and note that $M$ is random, having the binomial distribution with parameters $(n, \eps)$.
%Using the simple \lemref{T} below, we find that $|T_n - T'_n| \le M n^2 = O_P(n^3 \eps)$.
It's not hard to show that $|T_n - T'_n| \le M n^2$, so that $|T_n - T'_n| = O_P(n^3 \eps)$.
And by construction, $T'_n$ has the same distribution as $T_n$ under $\cH_0$.
We use this in what follows
\beq 
\frac{T_n - \zeta_n}{\tau_n} = \frac{T_n' - \zeta_n}{\tau_n} + \frac{T_n - T_n'}{\tau_n},
\eeq
where, on the RHS, the first term converges weakly to the standard normal distribution, while the second term is $= O_P(n^3 \eps/\tau_n) = o_P(1)$, since $\eps = n^{1-\beta}$ with $\beta > 1/2$ and $\tau_n \asymp  n^{5/2}$ by \eqref{T-var}. We thus conclude that \eqref{h0_rank_corr} with an application of Slutsky's theorem. 
\end{proof}

%\begin{lem} \label{lem:T}
%\end{lem}

\subsection{The higher criticism rank test}
The analog of the higher criticism test of \eqref{HC} is a higher criticism based on the pairwise differences in ranks, $D_i := |R_i - S_i|$.  To be specific, we define
\beq
\hcrank =  \underset{0 \le t \le n/2} \max \frac{\sum_{i=1}^n \IND{D_i \le t} - n u(t)}{\sqrt{ n u(t)(1-u(t))}},
\eeq
where $u(t)$ is the probability $\P_0(D_i \le t)$, which can be expressed in closed form as
\beq\label{u}
u(t)  =\frac{n^2 - (n-t)(n-t-1)}{n^2} = \frac{n(2t+1)-t(t+1)}{n^2}.
\eeq

Note that in this definition the denominator is only an approximation to the standard deviation of the numerator.  The standard deviation has a closed-form expression which can be derived from a more general result of \citet[Th 2]{hoeffding1951combinatorial}, but it is cumbersome and relatively costly to compute (although its computation is only done once for each $n$).
Also, there is a fair amount of flexibility in the choice of range of thresholds $t$ considered.  This particular choice seems to work well enough.
As any other rank test, it is calibrated by permutation (or Monte Carlo if there are no ties in the data).

\begin{thm}\label{thm:HCrank}
For the testing problem \eqref{problem1} under the model \eqref{model3}, the higher criticism rank test achieves the detection boundary in the dense and in the moderately sparse regimes.
\end{thm}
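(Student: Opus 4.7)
The plan is to lift the higher-criticism analysis of \prpref{hc} to the rank setting by using the marginal empirical CDFs as a bridge. Since $R_i/n = \hat F_n(X_i)$ (up to a $1/n$ offset) and analogously for $S_i/n$, and since under model \eqref{model3} the marginals of $Z^1$ and $Z^2$ are standard normal under both $\cH_0$ and $\cH_1$ (as $\Sigma_\rho$ has unit diagonal), the Dvoretzky--Kiefer--Wolfowitz inequality yields the uniform approximation
\beq
\sup_i \bigl|R_i/n - \Phi(Z^1_i)\bigr| + \sup_i \bigl|S_i/n - \Phi(Z^2_i)\bigr| = O_P\bigl(\sqrt{\log n / n}\bigr)
\eeq
under both hypotheses. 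This reduces the analysis of the $D_i$'s to that of the iid variables $\tilde D_i := n\,|\Phi(Z^1_i) - \Phi(Z^2_i)|$, at least at thresholds above the approximation floor $\sqrt{n \log n}$.

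For the null behaviour, note that under $\cH_0$ the pair $(\Phi(Z^1_i),\Phi(Z^2_i))$ is uniform on $[0,1]^2$, so $\tilde D_i/n$ has density $2(1-d)$ on $[0,1]$, which matches $u(t) \sim 2t/n$ as $t/n \to 0$. We would then apply the standard Darling--Erd\H{o}s-type null theory for HC, as used in \cite{AriasCastro:2018wr}, to the iid $\tilde D_i$'s to obtain $\hcrank = O_P(\sqrt{2 \log \log n})$. The residual dependence between the true $D_i$'s, coming from the rank permutation, contributes only lower-order variance and is controlled via the exact rank-statistic variance formula of \citet{Hoeffding:1994ff}.

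Under $\cH_1$, set $\sigma^2 := 1 - \rho = n^{-\gamma}$. For the contaminated pairs, $Z^1_i - Z^2_i \sim \cN(0, 2\sigma^2)$, and on the high-probability event $\{|Z^1_i| \le \sqrt{2 \log n}\}$ the mean value theorem gives $\tilde D_i/n \asymp |Z^1_i - Z^2_i|$, so the contaminated $\tilde D_i$ concentrates on scale $n\sigma$. Choosing the threshold
\beq
t^*/n \asymp \max\bigl(\sigma \sqrt{\log n},\; \sqrt{\log n / n}\bigr),
\eeq
we obtain $\P_{\text{cont}}(\tilde D_i \le t^*) = 1 - o(1)$ while $u(t^*) \asymp t^*/n$, so the HC contribution at $t^*$ is of order
\beq
\frac{n \eps \,\P_{\text{cont}}(\tilde D_i \le t^*)}{\sqrt{n \, u(t^*)}} \asymp \eps \sqrt{n/u(t^*)},
\eeq
which diverges precisely when $n \eps^2 \gg u(t^*)$. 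A short computation shows this happens exactly under the theorem's assumption $\gamma > 4(\beta - 1/2)$ combined with $\beta < 3/4$. A Chebyshev bound on the rank count, again using the Hoeffding variance formula, promotes this expectation-level statement to a high-probability one.

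The main obstacle is the non-independence of the $D_i$'s induced by the common rank permutation; the plan is to bypass it via the DKW coupling to the iid $\tilde D_i$'s, at the cost of introducing the approximation floor $\sqrt{\log n / n}$. That very floor is what limits the argument to $\beta < 3/4$: it forces the smallest usable $u(t)$ to be of order $\sqrt{\log n / n}$, capping the smallest detectable $\eps$ at $n^{-3/4}$ up to logarithms. A secondary issue, easier to dispatch, is to verify that the sup over $0 \le t \le n/2$ in the definition of $\hcrank$ is attained in the range where the DKW approximation is valid, so that the threshold $t^*$ above genuinely lower-bounds $\hcrank$.
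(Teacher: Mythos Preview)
Your treatment of the alternative is essentially the paper's: DKW to pass from $D_i/n$ to $|\Phi(Z^1_i)-\Phi(Z^2_i)|$, Lipschitz bound on $\Phi$ to reduce to $|Z^1_i-Z^2_i|$, then pick a single threshold $t^*$ and use Chebyshev. The paper's threshold $t_n=\lfloor n^{1-q}\rfloor$ is chosen slightly below $n\sigma$ rather than slightly above, but the arithmetic lands on the same condition $\gamma>4(\beta-1/2)$ with $\beta<3/4$. (Your ``secondary issue'' about where the sup is attained is a non-issue: $\hcrank\ge Q_n(t^*)$ is automatic once $t^*\le n/2$.)

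The null argument, however, has a real gap. The DKW coupling gives $\sup_i|D_i-\tilde D_i|=O_P(\sqrt{n\log n})$, so for any fixed $t$ the count discrepancy $|Z_n(t)-\tilde Z_n(t)|$ is of order $\sqrt{n\log n}$ (the density of $\tilde D_i/n$ near $0$ is bounded away from $0$). But the HC denominator is $\sqrt{n u(t)(1-u(t))}\asymp\sqrt{t}$, which can be as small as $1$; at small thresholds the coupling error on $Q_n(t)$ is therefore of polynomial order in $n$, swamping any $\sqrt{\log\log n}$ control on the $\tilde D_i$-based statistic. You cannot sidestep this by restricting to ``valid'' thresholds, because the null bound must hold uniformly over $0\le t\le n/2$. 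And the Hoeffding variance formula only gives second moments, so Chebyshev plus a union bound over $\asymp n$ thresholds yields a useless $O(\sqrt{n})$ null bound.

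What is missing is an exponential concentration inequality for each fixed count $Z_n(t)=\sum_i a_{i,\pi(i)}$ under a uniform random permutation $\pi$. The paper invokes exactly such a Bernstein-type bound for permuted sums (due to Albert), obtaining $\P_0(Q_n(t)\ge q)\le c_0\exp(-q/c_1)$ uniformly in $t\in[0,n/2]$; a union bound then gives $\hcrank=O_P(\log n)$, which is enough to separate from the diverging $Q_n(t^*)$ under $\cH_1$. This permutation concentration step is the key ingredient your plan does not supply.
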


%\begin{rem}
%We conjecture that the test, and in fact any rank test, is asymptotically powerless in the very sparse regime, even in the extreme case where $\rho = 1$.  See \secref{discussion} for a longer discussion.
%\end{rem}

\begin{proof}
As usual, we first control the test statistic under the null, and then analyze its behavior under the alternative.

\medskip\noindent
{\em Under the null hypothesis} \\
We start with the situation under the null hypothesis $\cH_0$, where we show that $\hcrank$ is of order at most $O(\log n)$ based on the concentration inequality for randomly permuted sums.
Fixing critical value $t$, define
\beq\label{a}
a_{i,j} = \IND{|i-j| \le t}, \quad \text{for } 1 \le i,j \le n.
\eeq
Since $X$ is independent of $Y$, as we are under the null, we have that
\beq\label{Z}
\Delta(t) := \sum_{i=1}^{n} \IND{D_i \le t}
\eeq
has the same distribution as $A_n := \sum_{i=1}^{n} a_{i, \pi_n (i)}$ when $\pi_n$ is a uniformly distributed random permutation of $[n] := \{1,\cdots, n\}$.
Note that
\beq\label{eq:upper_E_an}
\E(A_n) = \frac{1}{n} \sum_{i=1}^n \sum_{j=1}^n a_{i,j} = \frac{n(2t+1)-t(t+1)}{n} = n u(t).
%, \quad
%\Var(A_n) =  \frac{1}{n-1} \sum_{i=1}^n \sum_{j=1}^n d_{i,j}^2.
\eeq
By \cite[Prop 1.1]{chatterjee2007stein}, 
%\ery{changed to an earlier reference that gives the same bound.}
\begin{align}\label{eq:concentration_an}
\P(|A_n - \E(A_n)| \ge b)
&\le 2 \exp\left(- \frac{b^2}{4 \E(A_n) + 2 b}\right).
\end{align}
This implies that, for $q \ge 1$,
\begin{align}
\P_0\big(\Delta(t) \ge n u(t) + q \sqrt{n u(t) (1-u(t))}\big)
&\le 2 \exp\left(- \frac{q^2 n u(t) (1-u(t))}{4 n u(t) + 2 q \sqrt{n u(t) (1-u(t))}}\right) \\
&\le 2 \exp\left(- q/c_1\right),
\end{align}
for some other constant $c_1 > 0$, using the fact that $1/n \le u(t) \le 3/4 + 1/2n$ when $0 \le t \le n/2$, which is the range of $t$'s we are considering.
Hence, choosing $q = 2 c_1 \log n$ and using the union bound, we have
\begin{align}
\P_0(\hcrank \ge q)
&\le \sum_{t \le n/2} \P_0\big(\Delta(t) \ge n u(t) + q \sqrt{n u(t) (1-u(t))}\big) \\
&\le 2 (n/2 + 1) \exp\left(- q/c_1\right)
\asymp 1/n \to 0.
\end{align}

\medskip\noindent
{\em Under the alternative hypothesis} \\
We now consider the alternative $\cH_1$, and show that $\hcrank \gg \log n$ in probability under the stated condition.
For this, it suffices to find some $t = t_n \le n/2$ such that, for some $q = q_n \gg \log n$,
\beq\label{Zn-lb}
\Delta(t) \ge n u(t) + q \sqrt{n u(t) (1-u(t))},
\eeq
with probability tending to 1 (under $\cH_1$).

Since rank-based methods are invariant with respect to increasing transformations, in the following analysis we simply assume that $F = G = \Phi$.

\medskip\noindent
{\em Dense regime.} \quad 
Define $\hat F(x) = \frac{1}{n}\sum_{i=1}^n \IND{X_i \le x}$ and $\hat G(y) = \frac{1}{n}\sum_{i=1}^n \IND{Y_i \le y}$.  These empirical distribution functions are useful because, by definition, $R_i = n \hat F(X_i)$ and $S_i = n \hat G(Y_i)$, so that 
\begin{align}
D_i/n
&= |R_i - S_i|/n \\
&= |\hat F(X_i) - \hat G(Y_i)| \\
&\le |\hat F(X_i) - \Phi(X_i)| + |\Phi(X_i) - \Phi(Y_i)| + |\Phi(Y_i) - \hat G(Y_i)| \\
&\le \underbrace{|\Phi(X_i) - \Phi(Y_i)|}_{M_i} + \underbrace{\|\hat F - \Phi\|_\infty + \|\hat G - \Phi\|_\infty}_{K}.
\label{D-ub}
\end{align}
This gives
\beq\label{Delta_lb_dense}
\Delta(t) \ge \IND{K \le k/n} \Lambda(t), \quad  \Lambda(t) := \sum_{i=1}^n \IND{M_i \le (t-k)/n}.
\eeq

By the Dvoretzky-Kiefer-Wolfowitz (DKW) concentration inequality, there is a universal constant $c_0$ such that, for any $b \ge 0$,
\beq
\P(K \ge b) \le c_0 \exp(- n b^2/c_0).
\eeq
We choose $k = (\log n) \sqrt{n}$, and with that choice we have that $\IND{K \le k/n} = 1 - Q_n$, where $Q_n$ is Bernoulli with parameter bounded by $\eta := c_0 \exp(-(\log n)^2/c_0)$ (so that $Q_n = O_P(\eta)$).

As for the sum, the $M_i$ are iid, and for an observation $(X_i, Y_i)$ that comes from the null component, $X_i, Y_i$ are iid standard normal, while when it comes from the contaminated component, $X_i, Y_i$ are still marginally standard normal but no longer independent: $Y_i = \sqrt{1-\rho^2}\, \tilde Y_i + \rho X_i$, where $\tilde Y_i$ is independent of $X_i$ and also standard normal.
We thus have 
\beq
\P_1(M_i \le s) 
= (1-\eps) v_s(0) + \eps v_s(\rho),
\eeq
where
\beq
v_s(\rho)
:= \E[f_s(Z, \sqrt{1-\rho^2} Z' + \rho Z)],
\eeq 
where in the expectation $Z, Z'$ are iid standard normal, and $f_s(z, z') := \IND{|\Phi(z) - \Phi(z')| \le s}$ is bounded and measurable.
Elementary calculations show that $v_s(0) = 1 - (1-s)^2$, and an application of \lemref{2normals} shows that $v_s$ is infinitely differentiable, with derivative at $0$ equal to $\E[f_s(Z, Z') Z Z']$, and second derivative uniformly bounded over $[-1/2, 1/2]$ by some numerical constant, say $c_2$, independently of $s$.
Recalling that $\rho$ is small in the present regime, a Taylor development based on the above gives 
\begin{align}
v_s(\rho) 
&\ge 1 - (1-s)^2 + v_s'(0) \rho - c_2 \rho^2/2, \quad \rho \in [-1/2, 1/2].
\end{align}

In the dense regime, remember that $0 < \beta < 1/2$ and $\rho = n^{-\gamma}$.
We place ourselves above the detection boundary, meaning that we fix $\gamma < 1/2 -\beta$.
Here we choose $t = n/2$ (assumed to be an integer for convenience), let $s = (t-k)/n = 1/2-k/n$. 
We note that $v_s'(0)$ is continuous in $s$ (by dominated convergence), and because $s \to 1/2$ in our setting, we have 
\beq
v_s'(0) \to v_{1/2}'(0) = \E[\IND{|\Phi(Z) - \Phi(Z')| \le 1/2} Z Z'] =: c_1 > 0.
\eeq 
Indeed, using the fact that
\beq\notag
|\Phi(z) - \Phi(z')| \le 1/2 \iff (\Phi(z)-1/2) \vee 0 \le \Phi(z') \le (\Phi(z)+1/2) \wedge 1,
\eeq
with $\Phi(z) \le 1/2$ if and only if $z \le 0$, we have
\begin{align*} 
c_1 
&= \int_0^\infty \underbrace{\int_{\Phi^{-1}(\Phi(z)-1/2)}^\infty \phi(z') {\rm d} z'}_{> 0} \ \underbrace{\phi(z) z}_{>0} {\rm d} z + \int_{-\infty}^0 \underbrace{\int_{-\infty}^{\Phi^{-1}(\Phi(z)+1/2)} \phi(z') {\rm d} z'}_{< 0} \ \underbrace{\phi(z) z}_{< 0} {\rm d} z,
\end{align*}
where the inner integrals are positive by the fact that $\phi$ is symmetric, and the inequalities are indeed strict except when $z = 0$.

Thus, eventually (as $n \to \infty$), 
\beq
v_s(\rho) \ge 1 - (1-s)^2 + (c_1/2) \rho.
\eeq
Thus, an application of Chebyshev's inequality gives
\beq
\Lambda(n/2) 
\ge n \big[(1-\eps) v_s(0) + \eps v_s(\rho)\big] + O_P(\sqrt{n}).
\eeq

Putting everything together, we have
\begin{align}
\Delta(t) - n u(t)
&= (1 + O_P(\eta)) n \big[(1-\eps) v_s(0) + \eps v_s(\rho)\big] + O_P(\sqrt{n}) - n u(t) \\
&\ge n \big[1 - (1 - (t-k)/n)^2 - u(t)\big] + n \eps (c_1/2) \rho + O_P(n \eta) + O_P(\sqrt{n})  \\
&= n \eps (c_1/2) \rho + O_P((\log n) \sqrt{n}), 
\end{align}
using the fact that $\eta = o(1/n^2)$.
For \eqref{Zn-lb} to hold it thus suffices that $n \eps \rho \gg (\log n) \sqrt{n}$, which is the case since $n \eps \rho = n^{1-\beta-\gamma}$ with $1-\beta-\gamma > 1/2$.

\medskip\noindent
{\em Moderately sparse regime.} \quad
Let $I_0$ and $I_1$ index the observations coming from the null and contaminated components, respectively.
We have
\beq\label{Delta_decomposition}
\Delta(t)
= \sum_{i \in I_0} \IND{D_i \le t} + \sum_{i \in I_1} \IND{D_i \le t}
=: \Delta_0(t) + \Delta_1(t).
\eeq
We lower bound both terms on the right-hand side, starting with $\Delta_0(t)$. To do this, we consider a slightly smaller threshold, specifically 
$t_0 = (1 - \omega) t$ with $\omega = o(1)$ specified below, and compare $\Delta_0(t)$ with $\Delta^0(t_0) := \sum_{i \in I_0} \IND{D^0_i \leq t_0}$, where $D^0_i := |R^0_i - S^0_i|$ with $R^0_i$ denoting the rank of $X_i$ among $\{X_j: j\in I_0\}$ and $S^0_i$ denoting the rank of $Y_i$ among $\{Y_j: j\in I_0\}$.  
%Let $\Delta^0(t_0)$ denote a random variable with distribution of $W$ conditional on $|I_0| = n_0$. 
Conditional on $|I_0| = n_0$, $\Delta^0(t_0)$ has the same distribution as $\Delta(t_0)$ in \eqref{Z} under the null hypothesis but with $n$ replaced by $n_0$, so that from~\eqref{eq:upper_E_an} we deduce that it has expectation 
\beq
\mu := (n_0(2t_0+1)-t_0(t_0+1))/n_0,
\eeq 
and from~\eqref{eq:concentration_an} that
\beq
\Delta^0(t_0)\geq \mu - 8(\log n) \sqrt{\mu \vee \log n}
\eeq
with probability at least $1-2/n$ when $n$ is large enough. (Again, this is conditional on $|I_0| = n_0$.)
Because $\eps \ll n^{-1/2}$ in the present regime, we have $|I_0| \ge n - (\log n) \sqrt{n}$ with probability at least $1-1/n$ when $n$ is large enough. Also, we will choose $t$ below such that $\sqrt{n} \ll t \ll n$, and $\omega$ such that $\omega \ll 1$, so that $t_0 \sim t$.
Together, this implies that 
\beq
\Delta^0(t_0) 
\ge 2 t_0 + 1 - \frac{t_0 (t_0+1)}{n -(\log n) \sqrt{n}} - 8(\log n) \sqrt{2 t_0 + 1}
= 2 t_0 - \frac{t_0^2}n - O((\log n)\sqrt{t}),
\eeq
eventually, with probability at least $1-3/n$.

We now claim that, with probability tending to 1, $\Delta_{0}(t) \ge \Delta^0(t_0)$. 
Indeed, by definition of the ranks $R_i$ and modified ranks $R^0_i$, we have
\beq
R_i- R^0_i 
= \sum_{j\in I_1} \IND{X_j \leq  X_i} 
= |I_1| \hat F_1(X_i),
\eeq
where $\hat F_1(x) := \frac{1}{|I_1|}\sum_{j \in I_1} \IND{X_j \le x}$ is the empirical distribution function associated with the contaminated $X$ observations.
In particular, when $|I_0| = n_0$, so that $|I_1| = n-n_0 =: n_1$, we have
\beq
\big|R_i- R^0_i - n_1 \Phi(X_i)\big| \le n_1 \|\hat F_1 - \Phi\|_\infty,
\eeq
valid for all $i \in I_0$.
At the same time, and with analogous notation, we also have
\beq
\big|S_i- S^0_i - n_1 \Phi(Y_i)\big| \le n_1 \|\hat G_1 - \Phi\|_\infty,
\eeq
valid for all $i \in I_0$.
Combining these, we obtain
\beq
\underbrace{|R_i - S_i|}_{D_i}
\le \underbrace{|R^0_i - S^0_i|}_{D^0_i} + n_1 |\Phi(X_i) - \Phi(Y_i)| + n_1  \underbrace{\big(\|\hat F_1 - \Phi\|_\infty + \|\hat G_1 - \Phi\|_\infty\big)}_{=: K_1},
\eeq
valid for all $i \in I_0$.
Letting $\hat F_0$ denote the empirical distribution function of $\{X_i : i \in I_0\}$ and $\hat G_0$ denote that of $\{Y_i : i \in I_0\}$, we have
\beq\label{Phi-Phi_0}
|\Phi(X_i) - \Phi(Y_i)| 
\le \underbrace{|\hat F_0(X_i) - \hat G_0(Y_i)|}_{D^0_i/n_0} + \underbrace{\|\hat F_0 - \Phi\|_\infty + \|\hat G_0 - \Phi\|_\infty}_{=: K_0},
\eeq
valid for all $i \in I_0$. Note that this is conditional on $|I_0| = n_0$ and that the distributions of $K_0$ and $K_1$ depend (implicitly) on $n_0$ (and $n_1$). 
We conclude that, conditional on $|I_0| = n_0$, for any $i \in I_0$,
\beq\label{D_D0}
D_i \le (n/n_0) D^0_i + n_1 (K_0 + K_1).
\eeq
Applying the DKW inequality with the tight constant, we have that $K_0 \le (\log n)/\sqrt{n_0}$ and $K_1 \le (\log n)/\sqrt{n_1}$ with probability at least $1-2/n$ when $n$ is large enough, and when this is the case, $D_i \le (n/n_0) D^0_i + 2 (\log n) \sqrt{n_1}$, assuming that $n_0 \ge n_1$.
This is given $|I_0| = n_0$ and (therefore) $|I_1| = n_1$, and we also know that $|I_0| \ge n - (\log n) \sqrt{n}$ and $|I_1| \le 2 n \eps$ with probability at least $1-1/n$ when $n$ is large enough.  (We are using that $|I_1| \sim \Bin(n, \eps)$ with $n \eps = n^{1-\beta}$ with $\beta < 1$.)
Hence, with probability at least $1-3/n$,
\beq
D_i \le \frac{n D^0_i}{n - (\log n) \sqrt{n}} + 2 (\log n) \sqrt{2 n \eps},
\eeq
for any $i \in I_0$.
In particular, if we choose $\omega = (\log n)^2 \max\big(1/\sqrt{n}, \sqrt{n \eps}/t\big)$, then, with probability at least $1 - 2/n$ when $n$ is large enough, $D^0_i \le t_0$ implies that $D_i \le t$ for any $i \in I_0$, implying that $\Delta_0(t) \ge \Delta^0(t_0)$.

We thus conclude that
\beq
\Delta_0(t) \ge 2 t_0 - t_0^2/n - O_P((\log n)\sqrt{t}).
\eeq

As for $\Delta_1(t)$, as in \eqref{Delta_lb_dense}, we have
\beq
\Delta_1(t) \ge \IND{K \le k/n} \Lambda_1(t), \quad  \Lambda_1(t) := \sum_{i \in I_1} \IND{M_i \le (t-k)/n}.
\eeq
We choose $k = (\log n) \sqrt{n}$ as we did before, so that $\IND{K \le k/n} = 1 + O_P(\eta)$, with the same $\eta$ defined previously. 
As for the sum, $\Lambda_1(t)$ has the same distribution as $\sum_{i=1}^B \IND{\tilde M_i \le (t-k)/n}$, with $B$ binomial with parameters $(n, \eps)$ and $\tilde M_i
= |\Phi(\tilde X_i) - \Phi(\tilde Y_i)|$ with $(\tilde X_i, \tilde Y_i)$ iid normal with standard normal marginals and correlation $\rho$.  In particular,
\beq
\tilde M_i
\le \tfrac1{\sqrt{2\pi}} |\tilde X_i - \tilde Y_i| =: \tfrac1{\sqrt{\pi}} |\tilde U_i|,
\eeq
by the fact that $\Phi$ has derivative bounded by $1/\sqrt{2\pi}$ everywhere, and where $\tilde U_i \sim \cN(0, 1-\rho)$, and simple calculations give
\beq
v(s)
:= \P(\tilde M_i \le s) 
\ge \Psi\left(\frac{\sqrt{\pi} s}{ \sqrt{1-\rho}}\right) =: \lambda(s), \quad s \in [0,1]. 
\eeq
We thus have
\beq
\E_1(\Lambda_1(t)) = n \eps v((t-k)/n),
\eeq
and 
\beq
\Var_1(\Lambda_1(t)) 
= \Var(B) v((t-k)/n)^2 + \E(B) v((t-k)/n)
\le 2 n \eps v((t-k)/n),
\eeq
and applying Chebyshev's inequality, we thus have 
\begin{align}
\Lambda_1(t) 
&= n\eps v((t-k)/n) + O(\sqrt{n\eps v((t-k)/n)}) \\
&\ge (1+o_P(1)) n \eps \lambda((t-k)/n)),
\end{align}
as long as the right-hand side diverges.

In the moderately sparse regime, remember that $1/2 < \beta < 3/4$ and $\rho = 1 - n^{-\gamma}$.
We place ourselves just above the detection boundary, meaning that we fix $\gamma > 4 (\beta -1/2)$.
We focus on the harder sub-case where, in addition, $\gamma < 2\beta$.  In that case, we can fix $a$ such that $1/2 > a > \gamma/2$ and $1/2-\beta+\gamma/2 - a/2 > 0$, and set $t = \lfloor n^{1-a} \rfloor$.  Note that such a real number $a$ exists, and that $t \le n/2$ with $t \gg k$. 
We also have $n \eps = n^{1-\beta}$ and $u(t) \asymp t/n \asymp n^{-a}$, as well as 
\beq
\lambda((t-k)/n) = \Psi\left(\frac{\sqrt{\pi} (t-k)}{n \sqrt{1-\rho}}\right) \asymp n^{\gamma/2 - a}, \quad \text{since } \frac{\sqrt{\pi} (t-k)}{n \sqrt{1-\rho}} \asymp n^{\gamma/2 - a} \to 0,
\eeq
and $\Psi$ is differentiable at $0$ with positive derivative.  
In particular, $n \eps \lambda((t-k)/n) \asymp n^{1-\beta+\gamma/2-a} \to \infty$.
Putting everything together, we have
\begin{align*}
\Delta(t) - n u(t)
&\ge 2 t_0 - t_0^2/n - O_P((\log n)\sqrt{t}) + (1 + o_P(1)) n \eps \lambda((t-k)/n) - \big(2t+1 - t(t+1)/n\big) \\
&= - O_P((\log n) \sqrt{t}) + n \eps (1 + o_P(1)) \lambda((t-k)/n),
\end{align*}
after some simplifications, using the definition of $\omega$ above and the fact that $\sqrt{n} \ll t \ll n$.
For \eqref{Zn-lb} to hold, it is thus enough to have $n \eps \lambda((t-k)/n) \gg (\log n) \sqrt{t}$, which is the case since  
\beq
\frac{n \eps \lambda((t-k)/n)}{\sqrt{t}} 
\asymp \frac{n^{1-\beta+\gamma/2-a}}{n^{1/2-a/2}} 
= n^{1/2-\beta+\gamma/2-a/2},
\eeq
with $1/2-\beta+\gamma/2-a/2 > 0$ by our choice of $a$.
\end{proof}

\begin{lem}\label{lem:2normals}
Let $A, B$ be iid standard normal, and for $f\colon \bbR^2 \to [0,1]$ measurable and $r \in [-1,1]$, define $\Gamma_f(r) = \E[f(A, \sqrt{1-r^2} B + r A)]$. Then $\Gamma_f$ is infinitely differentiable, with $\Gamma_f'(0) = \E[f(A, B) AB]$, and with $\sup_{|r| \le 1/2} |\Gamma_f''(r)|$ bounded by some numerical constant (independent of $f$).
\end{lem}

\begin{proof}
We have
\beq
\Gamma_f(r)
= \int_{-\infty}^\infty \int_{-\infty}^\infty f(a, b) \phi(a,b;r) {\rm d}a {\rm d}b,
\eeq
where
\beq
\phi(a,b;r) := \frac{\exp\big[-(a^2 - 2rab + b^2)/(2-2r^2)\big]}{2\pi\sqrt{1-r^2}}.
\eeq
An application of the dominated convergence theorem allows us to differentiate under the integral at will.
In particular,
\beq
\Gamma_f^{(k)}(r) = \int_{-\infty}^\infty \int_{-\infty}^\infty f(a, b) \partial_r^k \phi(a,b;r) {\rm d}a {\rm d}b,
\eeq
Elementary calculations show that $\partial_r \phi(a,b;0) = (2\pi)^{-1} ab \exp[-(a^2+b^2)/2]$.
We also obtain
\beq
|\Gamma_f''(r)| \le \int_{-\infty}^\infty \int_{-\infty}^\infty |\partial_r^2 \phi(a,b;r)| {\rm d}a {\rm d}b,
\eeq
which is easily seen to uniformly bounded for $|r| \le 1/2$. 
\end{proof}

It is natural to wonder whether the higher criticism rank test has some power in the very sparse regime. The following indicates that it is powerless in that regime.

\begin{prp} \label{prp:Delta_very_sparse}
Consider the very sparse regime in the most extreme case where $\rho = 1$. In that setting, any test that rejects for large values of $\Delta(t) := \sum_{i=1}^n \IND{D_i \le t}$ (where the threshold $t$ is allowed to vary with $n$) is asymptotically powerless.
\end{prp}

\begin{proof}
By a compactness argument, we may assume that either $t \to \infty$ or $t$ is constant (as $n$ varies). We start with the former and address the latter at the end.
We focus on the case where $t \ll n$, as the case where $t \asymp n$ can be dealt with in a very similar fashion.  

\medskip\noindent
{\em Under the null hypothesis} \\
We first consider the behavior of $\Delta(t)$ under the null hypothesis, and argue that $\Delta(t)$ is asymptotically normally distributed. 
This is based on an application of a combinatorial central limit theorem due to \citet{hoeffding1951combinatorial}.
Remember that under $\cH_0$, $\Delta(t)$ has the distribution of $A_n = \sum_{i=1}^{n} a_{i, \pi_n (i)}$ when $\pi_n$ is a uniformly distributed random permutation of $[n]$ and $a_{i,j} = \IND{|i-j| \le t}$.
We saw that
\beq
\E(A_n) = \frac{1}{n} \sum_{i=1}^n \sum_{j=1}^n a_{i,j} = \frac{n(2t+1)-t(t+1)}{n} = n u(t),
\eeq
and, as derived in \cite{hoeffding1951combinatorial}, we also have 
\beq
\Var(A_n) =  \frac{1}{n-1} \sum_{i=1}^n \sum_{j=1}^n d_{i,j}^2,
\eeq
where 
\beq
d_{i,j} = a_{i,j} - \frac{1}{n} \sum_{g=1}^n a_{g,j} - \frac{1}{n} \sum_{h=1}^n a_{i,h} + \frac{1}{n^2} \sum_{g=1}^n \sum_{h=1}^n a_{g,h}.
\eeq
%($(d_{i,j})$ is the double centering of $(a_{i,j})$.)
\cite[Th 3]{hoeffding1951combinatorial} implies that $A_n$ is asymptotically normal when
\beq
\frac{\max_{\, i, j \in [n]} d_{i,j}^2}{\frac1{n^2} \sum_{i \in [n]} \sum_{j \in [n]} d_{i,j}^2} \to 0.
\eeq
Elementary but somewhat tedious calculations yield that this is the case if and only if $t \to \infty$, which we assume. 
Further elementary calculations, in part similar to some appearing in the proof of \thmref{HCrank}, yield that 
\beq
\frac{\Var(A_n)}{n u(t) (1- u(t))} \to 1,
\eeq
We thus have, under the null hypothesis,
\beq
\frac{\Delta(t) - n u(t)}{\sqrt{n u(t) (1-u(t))}} \weak \cN(0,1),
\eeq
and therefore, together with the fact that $1 \ll t \ll n$, we conclude that
\beq\label{Delta_normal}
\frac{\Delta(t) - 2t + t^2/n}{\sqrt{2t}} \weak \cN(0,1),
\eeq
again under the null hypothesis.

\medskip\noindent
{\em Under the alternative hypothesis} \\
We now consider the alternative, again in the very sparse regime and in the most advantageous case where $\rho = 1$, and show that the same weak limit holds. For this, we follow the arguments of the proof of \thmref{HCrank} in the moderately sparse regime, although in the reverse direction so-to-speak. We use the same notation.

Starting from the decomposition \eqref{Delta_decomposition}, we have
\begin{align}
\frac{\Delta(t) - 2t + t^2/n}{\sqrt{2t}}
&= \frac{\Delta_0(t) - 2t + t^2/n}{\sqrt{2t}} + \frac{\Delta_1(t)}{\sqrt{2t}}.
\label{Delta_very_sparse_1}
\end{align}
In what follows, we first show that the first term on the RHS is asymptotically standard normal, and then we show that the second term converges to 0 in probability.

\medskip\noindent
{\em First term in \eqref{Delta_very_sparse_1}.} \quad
For $i \in I_0$, as in \eqref{D_D0} but in reverse, we have 
\begin{align}
D^0_i 
&\le (1 + |I_1|/n) D_i + |I_1| (K_0 + K_1) \\
&\le \big(1 + \eps + (\log n) \sqrt{\eps/n}\big) D_i + (\log n) \sqrt{n \eps},
\end{align}
with probability tending to~1 uniformly over $i \in I_0$.
Assuming this is true, then $D_i \le t$ implies that 
\begin{align}
D^0_i 
&\le \big(1 + \eps + (\log n) \sqrt{\eps/n}\big) t + (\log n) \sqrt{n \eps} \\
&\le t_0 := (1 + \eps) t + 2 (\log n) \sqrt{n \eps}. 
\end{align}
Hence, with probability tending to~1,
\beq
\Delta_0(t) \le \Delta^0(t_0).
\eeq
As before, conditional on $|I_0| = n_0$, $\Delta^0(t_0)$ has the same distribution as $\Delta(t_0)$ in \eqref{Z} under the null hypothesis but with $n$ replaced by $n_0$. This, the fact that $|I_0| \ge n - O_P(\sqrt{n})$, and \eqref{Delta_normal}, implies that
\beq
\frac{\Delta^0(t_0) - 2t_0 + t_0^2/n}{\sqrt{2t_0}} \weak \cN(0,1).
\eeq
We used the fact that $t_0^2/n \le t_0^2/|I_0| \le t_0^2/(n - O(\sqrt{n}))$, which implies that 
\beq
\frac{t_0^2/|I_0|}{\sqrt{t_0}} 
= \frac{t_0^2/n}{\sqrt{t_0}} + \underbrace{O(t_0 \sqrt{t_0}/n\sqrt{n})}_{o(1)},  
\eeq
where the $O$ term is $o(1)$ by the fact that $t_0/n = o(1)$.
Continuing, with probability tending to~1, we have
\begin{align}
\frac{\Delta_0(t) - 2t + t^2/n}{\sqrt{2t}}
&\le \frac{\Delta^0(t_0) - 2t + t^2/n}{\sqrt{2t}} \\
&= \sqrt{t_0/t}\, \frac{\Delta^0(t_0) - 2t_0 + t_0^2/n}{\sqrt{2t_0}} + \frac{2t_0 - t_0^2/n -2t + t^2/n}{\sqrt{2t}} \\
&\weak \cN(0,1), \label{normal_upper_limit}
\end{align}
whenever $t_0/t \to 1$ and $(t_0 - t)/\sqrt{t} \to 0$ (using the fact that $t \le t_0 \ll n$).
This is the case exactly when $t \gg (\log n)^2 n \eps$. 

We now consider the complementary case. In fact, what follows applies when $t \le \sqrt{n}$. 
We use a slightly different strategy.
Recall that, for $i \in I_0$,
\begin{align}
R_i- R^0_i &= \sum_{j\in I_1} \IND{X_j \leq X_i}, \\
S_i- S^0_i &= \sum_{j\in I_1} \IND{Y_j \leq Y_i},
\end{align}
and combined with the triangle inequality, and recalling that $X_j = Y_j$ when $j \in I_1$, we have
\beq
|D_i - D^0_i| 
\le W_i := \sum_{j \in I_1} \IND{X_i \wedge Y_i \le X_j \le X_i \vee Y_i}.
\eeq
Consider the event 
\beq
\Omega = \big\{|I_1| \le 2 n\eps, K_0 \le (\log n)/\sqrt{n}\big\},
\eeq 
which happens with probability tending to one.
Given $\Omega$, we have
\begin{align}
\{D_i \le t\} 
&= \{D_i \le t, D^0_i \le t\} \cup \{D_i \le t, D^0_i > t\} \\ 
&\subset \{D^0_i \le t\} \cup \{W_i \ge D^0_i - t, 2 n \eps + t \ge D^0_i > t\},
\end{align}
using the fact that $D^0_i \le D_i + |I_1|$, so that
\begin{align}
\Delta_0(t) 
&\le \Delta^0(t) + \sum_{i \in I_0} \IND{W_i \ge D^0_i - t, 2 n \eps + t \ge D^0_i > t}.
\label{Delta_Delta0_W}
\end{align}
Given $\{(X_k, Y_k) : k \in I_0\}$, and conditional on $(|I_0|,|I_1|) = (n_0, n_1)$, $W_i$ is binomial with parameters $n_1$ and $P_i := |\Phi(X_i) - \Phi(Y_i)|$. As in \eqref{Phi-Phi_0}, the latter is bounded by $D^0_i/n + K_0$, which itself is bounded (eventually) by $2(\log n)/\sqrt{n}$ under $\Omega$ when $D^0_i = d$ with $d \le t + 2 n \eps$ (since we work under the assumption that $t \le \sqrt{n}$).
Thus, for such a $d$, eventually,
\begin{align}
\P(W_i \ge w \mid \Omega, D^0_i = d)
&\le \E\Big[\P\big(W_i \ge w \mid \Omega, D^0_i = d, (X_k, Y_k)_{k \in I_0}\big)\Big] \\
&\le 2 \P\big(W_i \ge w \mid P_i \le 2 (\log n)/\sqrt{n}\big) \\
&\le 2\, {\rm Prob}\big(\Bin(2n\eps, 2(\log n)/\sqrt{n}) \ge w \big) \\
&\le c_0 (n \eps \times (\log n)/\sqrt{n})^w,
\end{align}
where $c_0$ is a universal constant.
The factor of 2 in the second inequality comes from de-conditioning from $\{K_0 \le (\log n)/\sqrt{n}$.
In the last line we used the fact that ${\rm Prob}(\Bin(m, q) \ge k) \le \binom{m}{k} q^k$, referred to as the Gin\'e--Zinn inequality in \cite{dasgupta2008asymptotic}.
We also have
\begin{align}
\P(D^0_i = d \mid \Omega)
\le 2 \P(D^0_i = d \mid |I_1| \le 2 n \eps)
\le 2 \frac{2}{n - 2 n \eps}
\le \frac{5}n,
\end{align}
eventually, using the fact that $\P(D^0_i = d \mid |I_0| = n_0) \le 2/n_0$.
Together, this yields
\begin{align}
&\P\big(W_i \ge D^0_i - t, 2 n \eps + t \ge D^0_i > t \mid \Omega \big) \\
&\le \sum_{d \ge t+1}^{t+2 \lfloor n \eps \rfloor} \P\big(W_i \ge d - t \mid \Omega, D^0_i = d) \times \P(D^0_i = d \mid \Omega) \\
&\le c_1 \sum_{d \ge t+1}^{t+2 \lfloor n \eps \rfloor} ((\log n) \sqrt{n} \eps)^{d-t} \times \frac{1}n, \\
&\le c_1 \times \frac{2}n \times ((\log n) \sqrt{n} \eps).
\end{align}
Hence, the second term on the RHS of \eqref{Delta_Delta0_W} has expectation of order at most $n$ times the last term in our last derivations, meaning of order at most $(\log n)\sqrt{n} \eps = o(1)$. Since that term is integer-valued, this implies that $\Delta_0(t) \le \Delta^0(t)$ with probability tending to one.
In particular, \eqref{normal_upper_limit} applies.

\medskip\noindent
{\em Second term in \eqref{Delta_very_sparse_1}.} \quad
Consider $i \in I_1$.
Because $\rho = 1$, we have  $X_i = Y_i$, and conditional on $X_i = z$, $R_i - 1$ and $S_i -1$ are iid with distribution $\Bin(n-1, p)$ where $p := \Phi(z)$. In particular, $D_i$ has the distribution of $|U - V|$ where $U$ and $V$ are iid with distribution $\Bin(n-1, P)$ and $P \sim \Unif(0,1)$. Let $u_2(t)$ denote the probability that $D_i \le t$. We want to bound $u_2(t)$ from above.

For $p \in [0,1]$, define $g(p)$ as the probability that $|U -V| \le t$ when $U$ and $V$ are iid $\Bin(n-1, p)$, and note that $u_2(t) = \int_0^1 g(p) {\rm d}p$.
Define $\sigma^2 = 2 (n-1) p(1-p)$, which is the variance of $U-V$, and also $h(a) = \P((U-V)/\sigma \le a)$.
Using the fact that $U - V$ is integer valued, we have
\begin{align}
g(p) 
= h(t/\sigma) - h(-(t+1)/\sigma)
\le \Phi(t/\sigma) - \Phi(-(t+1)/\sigma) + 2 \|h - \Phi\|_\infty.
\end{align}
Where $\Phi$ is the standard normal distribution function. 
Because $\Phi$ has derivative bounded by $1/\sqrt{2\pi}$ everywhere, the first term on the RHS is $= O(t/\sigma)$.
For the second term, we use the Berry--Esseen inequality (seeing $U$ and $V$, each, as the sum of $n-1$ iid ${\rm Ber}(p)$ random variables), to get that it is $= O(1/\sigma)$. Therefore, since $t \ge 1$, there is a universal constant $c_0$ such that $g(p) \le c_0 t/\sigma$. Of course, being a probability, we also have $g(p) \le 1$.
Hence,
\beq
u_2(t) 
= \int_0^1 g(p) {\rm d}p
\le \int_0^1 \left(1 \wedge  \frac{c_0 t}{2(n-1)p(1-p)}\right) {\rm d}p
\asymp 1 \wedge t/\sqrt{n}.
\eeq  

Now, by Markov's inequality, and the fact that $|I_1|$ is binomial with parameters $(n, \eps)$, the second term in \eqref{Delta_very_sparse_1} is 
\beq
= \frac{O_P(n \eps) O_P(u_2(t))}{\sqrt{n u(t) (1-u(t))}}
\asymp \frac{n^{1-\beta} (1 \wedge t/\sqrt{n})}{\sqrt{t}}
\asymp n^{1-\beta} t^{-1/2} \wedge n^{1/2-\beta} t^{1/2}
\to 0, 
\eeq
for any choice of $t$ when $\beta > 3/4$ (very sparse regime).

\medskip\noindent
{\em Special case: $t$ constant.}
When $t$ is constant, the null distribution of $\Delta(t)$ is known to converge to the Poisson distribution of mean $2t+1$. (See \cite[Exa 1.3]{arratia1990poisson}, which is only cosmetically different.) 
The control under the alternative can be secured in exactly the same way. In particular, it holds that $\Delta_0(t) \le \Delta^0(t)$ with probability tending to one, with $\Delta^0(t)$ having the same asymptotic distribution (Poisson with mean $2t+1$).
\end{proof}

\subsection{Numerical experiments}
We consider the same setting as in \secref{GMM} and compare the two nonparametric tests, the covariance rank test and the higher criticism rank test, to the parametric tests.
The p-values for the higher criticism rank test are obtained based on $10^5$ permutations, while the p-values for the covariance rank test are taken from the limiting distribution based on its correspondence with the Spearman rank correlation.

The results are presented in \figref{numeric2}.
In finite samples, the higher criticism rank test exhibits substantially more power than the higher criticism in the dense and moderately sparse regime. We have no good explanation for this rather surprising phenomenon. However, the higher criticism rank test has no power in the very sparse regime, and neither does the covariance rank test.

\begin{figure}[ht!]
\centering	
\subfigure[$\beta=0.2$]{
\includegraphics[width=0.45\textwidth]{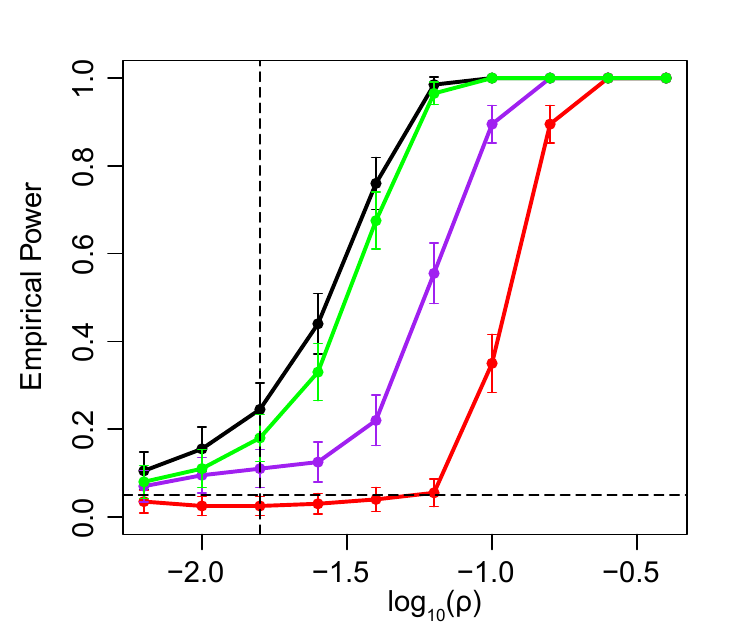}
\label{fig:subfig5}
}	
\subfigure[$\beta=0.4$]{
\includegraphics[width=0.45\textwidth]{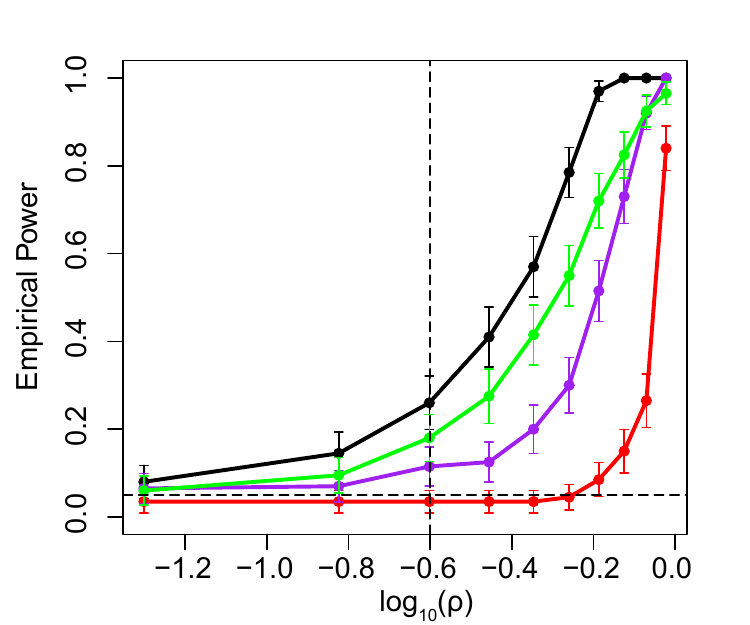}
\label{fig:subfig6}
}
\subfigure[$\beta=0.6$]{
\includegraphics[width=0.45\textwidth]{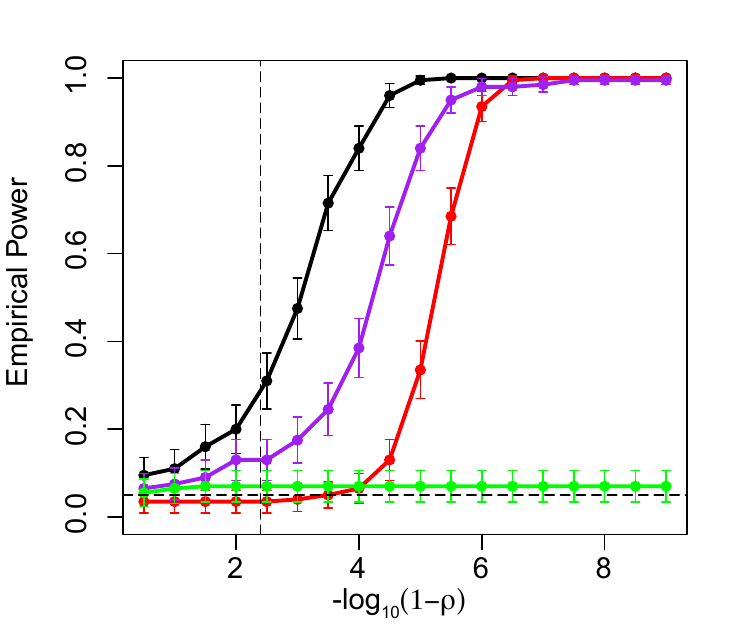}
\label{fig:subfig7}
}
\subfigure[$\beta=0.8$]{
\includegraphics[width=0.45\textwidth]{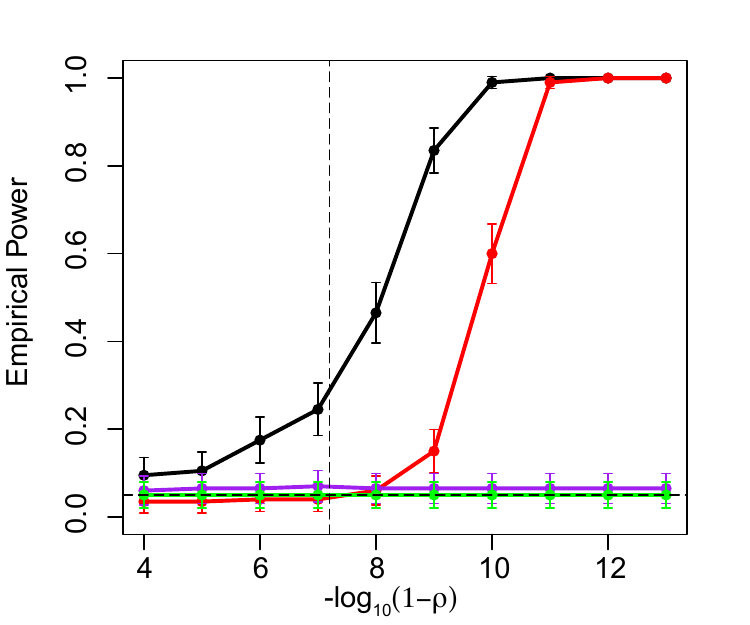}
\label{fig:subfig8}
}	
\caption{Empirical power comparison with 95\% error bars for the likelihood ratio test (black), the covariance rank test (green), the higher criticism test (red) and the higher criticism rank test (purple). \subref{fig:subfig5} Dense regime where $\beta=0.2$. \subref{fig:subfig6} Dense regime where $\beta=0.4$. \subref{fig:subfig7} Sparse regime where $\beta=0.6$ and $\rho \to 1$. \subref{fig:subfig8} Sparse regime where $\beta=0.8$ and $\rho \to 1$. The horizontal line marks the level (set at 0.05) and the vertical line marks the asymptotic detection boundary derived earlier. The sample size is $n =10^6$ and the power curves and error bars are based on 200 replications.}
\label{fig:numeric2}
\end{figure}

\section{Discussion}
\label{sec:discussion}

\paragraph{The power residing in the $V_i$}
In \prpref{hc} we established that the higher criticism test based on $U_1, \dots, U_n$ achieves the detection boundary in the Gaussian mixture model.  It is natural, however, to ask whether one could do better in finite samples by also utilizing $V_1, \dots, V_n$.  We performed some side experiments to quantify this by comparing the full LRT, meaning the LRT based on $(U_1, V_1), \dots, (U_n, V_n)$, the LRT based on $U_1, \dots, U_n$ only, and the LRT based on $V_1, \dots, V_n$ only.  We did so in the same parametric setting of \secref{param_numerics}.
The results are reported in \figref{supp}, and can be to some extent anticipated from our previous work \cite{AriasCastro:2018wr}.  In a nutshell, in the dense regime, what matters is the deviation of the variance from 1, and this is felt by all tests, so that the $U$-LRT and the $V$-LRT are seen to be also as powerful as the full LRT.  In the sparse regime, however, we can see that the $V$-LRT has essentially no power.  This is due to the fact that the $V_i$'s in that case have variance $1+\rho$, which is bounded from above by 2, so that no test depending on the $V_i$'s can have any power as we show in \cite{AriasCastro:2018wr}.  The $U$-LRT, which we know to be asymptotically optimal to first order, remains competitive, although now clearly less powerful than the full LRT.

\begin{figure}[ht!]
\centering	
\subfigure[$\beta=0.4$]{
\includegraphics[width=0.45\textwidth]{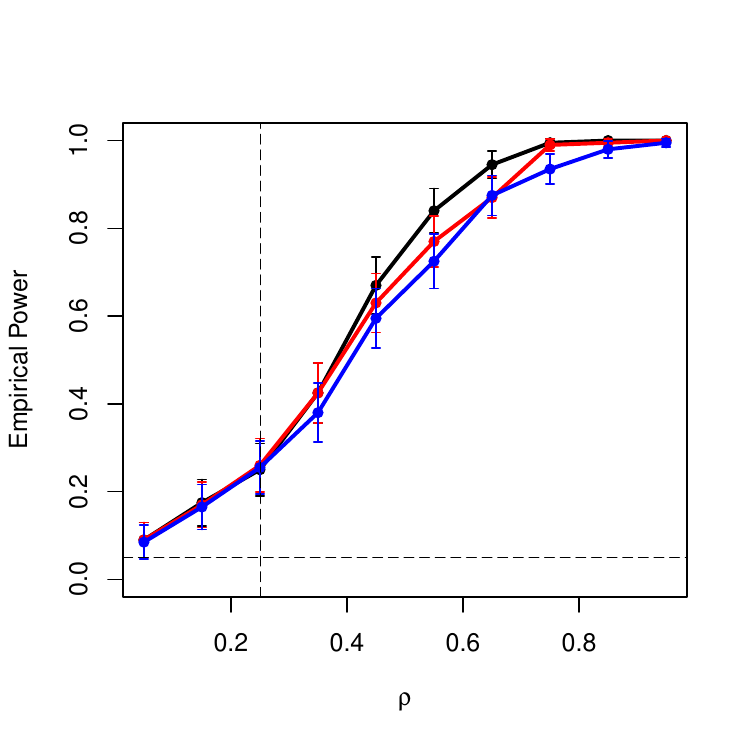}
\label{fig:subfig9}
}	
\subfigure[$\beta=0.6$]{
\includegraphics[width=0.45\textwidth]{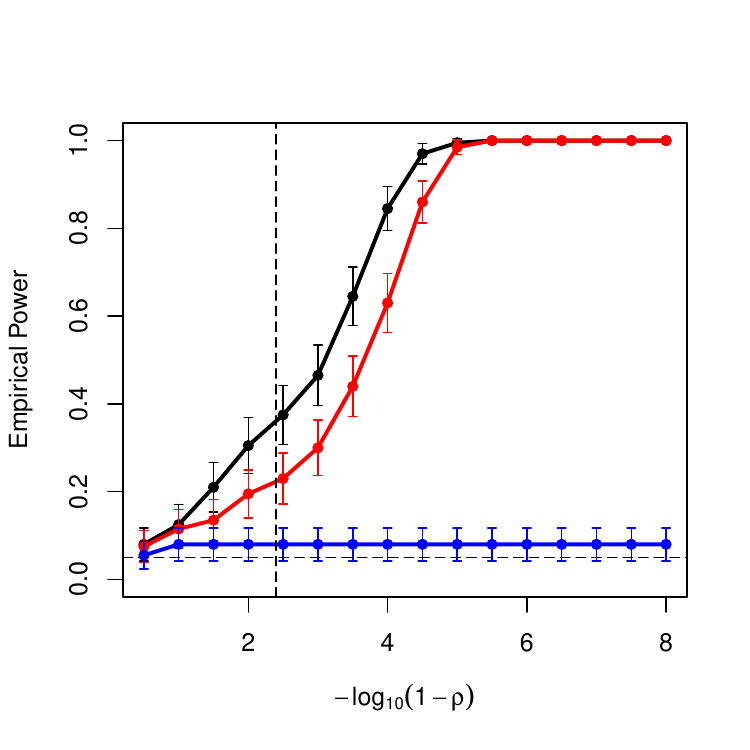}
\label{fig:subfig10}
}
\caption{Empirical power comparison with 95\% error bars for the full LRT (black), the $U$-LRT (red) and the $V$-LRT (blue). \subref{fig:subfig9} Dense regime where $\beta=0.4$. \subref{fig:subfig10} Sparse regime where $\beta=0.6$ and $\rho \to 1$. The horizontal line marks the level (set at 0.05) and the vertical line marks the asymptotic detection boundary derived earlier. The sample size is $n =10^6$ and the power curves and error bars are based on 200 replications.} 
\label{fig:supp}
\end{figure}

\paragraph{The power of rank tests in the very sparse regime}
In \prpref{Delta_very_sparse} we argued, we hope convincingly, that no test that resembles the higher criticism rank test has any power in the very sparse regime ($\beta > 3/4$).  This seems clear from the experiments reported in \figref{numeric2}.
This begs the question of whether there are any rank tests that have any (asymptotic) power in the very sparse regime.
We do not know the answer to that question, but are willing to conjecture that there are no such tests.

\paragraph{The two-sided problem}
We focused on the one-sided setting \eqref{model1}, effectively testing $\rho = 0$ versus $\rho > 0$. Knowing the sign of $\rho$ is not crucial, as one can apply a one-sided test for $\rho > 0$ to the transformed data $(X_1, -Y_1), \dots, (X_n, -Y_n)$. 
Less trivial is the case where there are three components
\[
(X, Y) \sim (1-\eps)\, \cN(0, \I) + \frac\eps2\, \cN(0, \Sigma_\rho) + \frac\eps2\, \cN(0, \Sigma_{-\rho}).
\]
We did not look at this model, in part because we wanted to test against a monotonic association (in the contamination component), which is perhaps the most popular alternative in a nonparametric context.
%In a nutshell, we anticipate things will be analogous to the setting of Ingster and Donoho \& Jin: In the dense regime the detection boundary will be different, while it will be the same in the sparse regime. More importantly for us, though, is that we do not know what rank tests to suggest. 

\subsection*{Acknowledgments}
We are grateful to two anonymous referees for asking pertinent questions that helped improved the paper, and for asking about a technical point in the proof of \thmref{HCrank} lacking rigor.

\small
\bibliographystyle{abbrvnat}
\bibliography{ref}

\end{document}